\setlist{parsep=0pt,listparindent=\parindent}
\newtheorem{theorem}{Theorem}[section]
\newtheorem{alphtheorem}{Theorem}
\newtheorem{proposition}[theorem]{Proposition}
\newtheorem{lemma}[theorem]{Lemma}
\theoremstyle{definition}
\newtheorem{definition}[theorem]{Definition}
\newtheorem{construction}[theorem]{Construction}
\newtheorem{example}[theorem]{Example}
\newtheorem{remark}[theorem]{Remark}
\newcommand{\Mod}[1]{{\text{Mod-}{#1}}}
\newcommand{\fgmod}[1]{{\text{mod-}{#1}}}
\newcommand{\Inj}[1]{\text{Inj-}{#1}}
\newcommand{\der}[1]{\mathcal{D}({#1})}
\DeclareMathOperator{\Hom}{Hom}
\newcommand{\tens}[3]{{#1} {\otimes_{#2}} {#3}}
\DeclareMathOperator{\OpSpan}{span}
\newcommand{\vspan}[2]{\OpSpan_{#1}\{{#2}\}}
\DeclareMathOperator{\OpLoc}{Loc}
\newcommand{\lsub}[2]{\OpLoc_{#1}({#2})}
\newcommand{\lsubinj}[1]{\lsub{#1}{\Inj{#1}}}
\newcommand{\ds}[3]{\bigoplus_{{#1} \in {#2}}{#3}}
\newcommand{\dual}[1]{D(#1)}
\newcommand{\twobytwo}[4]{\left( \begin{matrix} {#1} & {#2} \\ {#3} & {#4} \end{matrix} \right)}
\newcommand{\onebytwo}[2]{\left( \begin{matrix} {#1} & {#2} \end{matrix} \right)}
\newcommand{\twobyone}[2]{\left( \begin{matrix} {#1} \\ {#2} \end{matrix} \right)}
\newcommand{\integers}[0]{\mathbb{Z}}
\newcommand{\quotient}[2]{{#1}/{#2}}
\newcommand{\function}[3]{{#1} \colon {#2} \rightarrow {#3}}
\DeclareMathOperator{\OpCohomology}{H}
\newcommand{\cohomology}[2]{\OpCohomology^{#1} \! \left( {#2} \right) }
\DeclareMathOperator{\OpRadical}{rad}
\newcommand{\rad}[1]{\OpRadical ({#1})}
\newcommand{\radpower}[2]{\OpRadical^{#1} ({#2})}
\DeclareMathOperator{\OpProjDim}{pd}
\newcommand{\projdim}[2]{\OpProjDim_{#1} \left({#2} \right)}
\DeclareMathOperator{\OpFinDim}{FinDim}
\newcommand{\FinDim}[1]{\OpFinDim ({#1})}
\DeclareMathOperator{\Opfindim}{findim}
\newcommand{\findim}[1]{\Opfindim ({#1})}
\DeclareMathOperator{\Opop}{op}
\newcommand{\op}[1]{{#1}^{\Opop}}
\newcommand{\iidemp}[1]{e_{#1}}
\newcommand{\viidemp}[1]{\tilde{e}_{#1}}
\newcommand{\constr}[0]{\tilde{A}}
\newcommand{\constrquiver}[0]{\tilde{Q}}
\newcommand{\constrrel}[0]{\tilde{I}}
\newcommand{\opconstr}[0]{\op{\constr}}
\newcommand{\loopedge}[1]{\tilde{\ell}_{#1}}
\newcommand{\connedge}[1]{\tilde{m}_{#1}}
\newcommand{\simp}[2][]{S_{#1}({#2})}
\newcommand{\alginj}[2]{\dual{{#1}{#2}}}
\def\ps@pprintTitle{ 
 \let\@oddhead\@empty
 \let\@evenhead\@empty
 \def\@oddfoot{\centerline{\thepage}}
 \let\@evenfoot\@oddfoot}
\begin{document}

\begin{frontmatter}

\title{\textbf{Left-right symmetry of finite finitistic dimension}}

\author{Charley Cummings}

\begin{abstract}
We show that the finitistic dimension conjecture holds for all finite dimensional algebras if and only if, for all finite dimensional algebras, the finitistic dimension of an algebra being finite implies that the finitistic dimension of its opposite algebra is also finite. We also prove the equivalent statement for injective generation.
\end{abstract}

\begin{keyword}

Finitistic dimension \sep Injective generation \sep Derived categories
\end{keyword}

\end{frontmatter}

\setcounter{tocdepth}{1}
\tableofcontents

\section{Introduction} \label{sec:intro}

The homological conjectures are a collection of longstanding open questions about the representation theory of finite dimensional algebras. A summary can be found in \cite{Happel1991}. One of the strongest conjectures is the finitistic dimension conjecture, which concerns the finiteness of a numerical invariant associated to an algebra. If it is true, then several other homological conjectures also hold, including the Nunke condition \cite{Nunke--1955--NunkeCondition} and the Nakayama conjecture \cite{Nakayama--1958--NakConj--WithMullerResults}.

Numerical invariants have long been used to measure the complexity of mathematical structures. Some examples in representation theory, are homological dimensions such as the global dimension \cite{Cartan1956}, the finitistic dimension \cite{AuslanderBuchsbaum1957}, and the dominant dimension \cite{Nakayama--1958--NakConj--WithMullerResults}. These three dimensions have been studied extensively, particularly with a view to classify when they are finite. To have finite global dimension is quite a restrictive property for a ring; even straightforward algebras like $\quotient{k[\ell]}{\langle \ell^2 \rangle}$ do not satisfy this property. Whereas, the other two dimensions are conjectured to be finite for quite large classes of rings. Indeed, the Nakayama conjecture asserts that the dominant dimension of a finite dimensional algebra is infinite if and only if the algebra is self-injective \cite{Nakayama--1958--NakConj--WithMullerResults}. Whereas, the finitistic dimension conjecture asserts that the finitistic dimension of a finite dimensional algebra is finite. A survey on this conjecture can be found in \cite{Huisgen1995}. These conjectures remain open in general, but have been verified for various classes of algebras including commutative algebras \cite{AuslanderBuchsbaum1957}, radical cube zero algebras \cite{GreenZimmermannHuisgen1991}, monomial algebras \cite{GreenKirkmanKuzmanovich1991-MonomialAlgebras-LeftRightSymmetry}, and representation dimension three algebras \cite{IgusaTodorov2005}.

Homological dimensions can be defined in terms of either left or right modules. Sometimes, the value of a dimension is unaffected by this choice, that is, the dimension is left-right symmetric. This is true for both the global dimension of a Noetherian ring \cite[Theorem~4]{Auslander--1955--LRGlobDimEqual}, and the dominant dimension of a finite dimensional algebra \cite[Theorem~4]{Muller1968-DominantDimensionIsLeftRightSymmetric}. However, it is not true for the finitistic dimension; not even when restricted to quiver algebras \cite[Example~2.2]{Jensen--Lenzing--1982--LRFinDimDifferentExample}. In fact, the left and right finitistic dimensions of a quiver algebra can be arbitrarily different  \cite[Example~1.2]{GreenKirkmanKuzmanovich1991-MonomialAlgebras-LeftRightSymmetry}.

For a ring $\Lambda$, a left $\Lambda$-module is simply a right $\op{\Lambda}$-module. Since the dominant dimension of a finite dimensional algebra is left-right symmetric, it follows that an algebra satisfies the Nakayama conjecture if and only if its opposite algebra does. In 1991, Happel \cite{Happel1991} asked if this property also holds for the finitistic dimension conjecture. That is, if an algebra has finite finitistic dimension, then does its opposite algebra also have finite finitistic dimension? The question has remained unsolved for the last three decades \cite[p.27]{Christensen--Holm--2010--LRfindim}. 
We show that, heuristically, this is for good reason; answering Happel's question is equivalent to proving or disproving the finitistic dimension conjecture.

\begin{alphtheorem} [Theorem~\ref{thm:little_findim}] \label{thm:intro_little_findim}
    The finitistic dimension conjecture holds for all finite dimensional algebras if and only if, for all finite dimensional algebras $\Lambda$, the finitistic dimension of $\Lambda$ being finite implies that the finitistic dimension of $\op{\Lambda}$ is finite.
\end{alphtheorem}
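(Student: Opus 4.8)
The forward implication needs no work: if the finitistic dimension conjecture holds for all finite dimensional algebras then $\findim{\op{\Lambda}}$ is finite for \emph{every} finite dimensional algebra $\Lambda$, so the stated implication is true simply because its conclusion is. The content is therefore the reverse implication, and the plan is as follows. Assume that, for every finite dimensional algebra, finiteness of the finitistic dimension forces finiteness of the finitistic dimension of the opposite algebra; since $\Lambda \mapsto \op{\Lambda}$ is an involution on the class of finite dimensional algebras, this assumption is in fact equivalent to the a priori stronger statement that $\findim{\Lambda}$ is finite if and only if $\findim{\op{\Lambda}}$ is finite. Now fix an arbitrary finite dimensional algebra $A = kQ/I$; the goal is to prove $\findim{A}$ is finite.

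The engine of the proof is a construction attaching to $A$ a finite dimensional algebra $\constr = k\constrquiver/\constrrel$, where $\constrquiver$ is obtained from $Q$ by adjoining, for each vertex $i$, a loop $\loopedge{i}$ and a connecting arrow $\connedge{i}$, and $\constrrel$ extends $I$ (typically including the relations $\loopedge{i}^2 = 0$ together with relations tying each $\connedge{i}$ to the arrows of $Q$ in a deliberately one-sided way). The two properties to extract are: (P1) $\findim{\constr}$ is finite, with no hypothesis on $A$; and (P2) if $\findim{\op{\constr}}$ is finite, then $\findim{A}$ is finite. Granting these, the theorem follows at once: applying the standing assumption to $\constr$ and invoking (P1) gives $\findim{\op{\constr}}$ finite, whence (P2) gives $\findim{A}$ finite; as $A$ was arbitrary, the finitistic dimension conjecture holds in general.

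For (P1) the natural strategy is to arrange that the injective $\constr$-modules generate the unbounded derived category, i.e.\ $\der{\constr} = \lsubinj{\constr}$; by the known implication that injective generation forces finite finitistic dimension, this delivers (P1) unconditionally. Concretely, one identifies the indecomposable injectives $\alginj{\constr}{\iidemp{i}}$ and shows that each projective generator, hence all of $\der{\constr}$, is built from them by finitely many triangles — the adjoined loops and connecting arrows being present precisely to supply the morphisms relating injectives, projectives and the simples $\simp{i}$ that make this reduction work, irrespective of what $A$ is. For (P2) the plan is to realise $A$ inside $\op{\constr}$ by a standard change of rings — as a corner $\viidemp{}\,\op{\constr}\,\viidemp{}$ over which $\op{\constr}\viidemp{}$ is projective, or as a quotient of $\op{\constr}$ by an idempotent ideal of finite projective dimension — so that a module of finite projective dimension over $A$ induces one of controlled finite projective dimension over $\op{\constr}$, yielding a bound of the shape $\findim{A} \le \findim{\op{\constr}} + c$ with $c$ absolute. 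The one-sidedness engineered into $\constrrel$ is what lets $A$ sit as such a transparent corner of $\op{\constr}$ while $\op{A}$ does \emph{not} sit analogously inside $\constr$; without that asymmetry the very same argument applied on the other side would prove $\findim{A}$ finite unconditionally, i.e.\ would prove the finitistic dimension conjecture outright.

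The main obstacle is (P1): from a completely arbitrary $A$, one must produce $\constr$ whose derived category is demonstrably generated by injectives, and this forces the loop-and-connecting-arrow decoration, together with $\constrrel$, to be chosen with enough care that the injective cogenerator of $\Mod{\constr}$ is reachable from the projective generators through finitely many triangles, yet with little enough extra structure that $\op{\constr}$ still carries $A$ as the well-behaved corner required for (P2). I also expect the analogue for injective generation promised in the abstract to follow from the same construction: (P1) already asserts that $\Inj{\constr}$ generates $\der{\constr}$ without hypothesis, and one checks that, along the same corner, $\Inj{\op{\constr}}$ generating $\der{\op{\constr}}$ transfers to $\Inj{A}$ generating $\der{A}$.
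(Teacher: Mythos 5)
Your skeleton --- adjoin extra data to $A$ to produce $\constr$, prove (P1) that one of $\findim{\constr},\findim{\op{\constr}}$ is finite unconditionally and (P2) that the other bounds $\findim{A}$, then feed (P1) through the standing hypothesis to obtain (P2)'s antecedent --- is exactly the paper's, up to swapping the labels $\constr\leftrightarrow\opconstr$. The construction is also essentially the one the paper uses, with one detail worth making explicit: the paper adjoins a \emph{new} vertex $\tilde{i}$ for each vertex $i$, puts the loop $\loopedge{i}$ at $\tilde{i}$ with $\loopedge{i}^2=0$, and adds an arrow $\connedge{i}\colon\tilde{i}\to i$; packaged algebraically, $\constr$ is the triangular matrix ring $\twobytwo{A}{0}{M}{B}$ with $B=\prod_i k[\loopedge{i}]/\langle\loopedge{i}^2\rangle$. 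With that packaging, (P2) --- in the paper's labelling, $\findim{A}\le\findim{\constr}$ --- is a single citation to \cite[Corollary~4.21]{FossumGriffithReiten1975} rather than a bespoke corner or change-of-rings argument; your sketch of that step is plausible but heavier than needed.

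The more substantive divergence is (P1). You propose to establish unconditional finiteness by first proving that injectives generate and then invoking Rickard's theorem. The paper does prove that injectives generate for $\opconstr$ (Proposition~\ref{prop:IG(tildeA^op)}), but it uses that only for the separate Theorem~\ref{thm:IG}; for the finitistic-dimension statement it does something cheaper and stronger, showing directly that $\FinDim{\opconstr}=0$ via Bass's characterisation ($\FinDim{\Lambda}=0$ if and only if $\dual{\Lambda}$ surjects onto every simple module) together with Lemma~\ref{lemma:simples_in_top_of_dual}, which pins down the Loewy structure of $\alginj{\opconstr}{\viidemp{i}}$. Routing through injective generation would work, but you would be proving the easier fact by way of the harder one, and you would lose the exact value $\findim{\opconstr}=0$, which the paper wants for Theorem~\ref{thm:intro_contapositive_findim}. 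Finally, before running the argument you need to reduce to basic algebras over an algebraically closed field; the paper does this via the Jensen--Lenzing base-change inequality and Morita invariance of the finitistic dimension, a reduction your proposal silently assumes by writing $A=kQ/I$.
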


To prove Theorem~\ref{thm:intro_little_findim}, we define a ring construction that takes a basic finite dimensional algebra $A$ and produces a related algebra $\constr$. The finitistic dimension of $\constr$ bounds the finitistic dimension of $A$ from above. Moreover, the algebra $\constr$ is constructed in such a way that its opposite algebra has quite simple homological properties. This allows us to prove something stronger than Theorem~\ref{thm:intro_little_findim}.

\begin{alphtheorem} [Propositions~\ref{prop:findim(A)_less_than_findim(tildeA)} and \ref{prop:findim(tildeA^op)_is_zero}] \label{thm:intro_contapositive_findim}
    If there exists a counterexample to the finitistic dimension conjecture, then there exists a finite dimensional algebra $\Lambda$ such that the finitistic dimension of $\Lambda$ is infinite and the finitistic dimension of $\op{\Lambda}$ is zero.
\end{alphtheorem}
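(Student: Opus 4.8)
\noindent\textbf{Proof proposal for Theorem~\ref{thm:intro_contapositive_findim}.}
The theorem is a quick consequence of Propositions~\ref{prop:findim(A)_less_than_findim(tildeA)} and \ref{prop:findim(tildeA^op)_is_zero} once the construction $\constr$ is available, so the content really lies in that construction and those two propositions, all of which I now sketch. Suppose some finite dimensional algebra is a counterexample to the finitistic dimension conjecture. Passing to its basic algebra is legitimate because $\findim{\cdot}$ is a Morita invariant: a Morita equivalence restricts to an exact equivalence of the finitely generated module categories that preserves projectivity, hence preserves projective dimension. We thus obtain a \emph{basic} finite dimensional algebra $A$ with $\findim{A}=\infty$. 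Applying the construction to $A$ gives a finite dimensional algebra $\constr$, and Proposition~\ref{prop:findim(A)_less_than_findim(tildeA)} gives $\findim{\constr}\geq\findim{A}=\infty$ while Proposition~\ref{prop:findim(tildeA^op)_is_zero} gives $\findim{\op{\constr}}=0$; so $\Lambda:=\constr$ satisfies the conclusion.

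It remains to set up the construction and the two propositions. Writing $A=kQ/I$ with $Q$ a finite quiver, I would obtain $\constrquiver$ from $Q$ by adjoining, for each vertex $i\in Q_{0}$, one new vertex carrying a loop $\loopedge{i}$ together with a single connecting arrow $\connedge{i}$ incident to $i$ and to the new vertex, and take $\constrrel$ to consist of $I$ together with $\loopedge{i}^{2}=0$ and a small number of relations linking the $\connedge{i}$ to the $\loopedge{i}$. The relations must be chosen so that (i) $A$ is recovered from $\constr$ both as the corner $e\constr e$ and as the quotient $\constr/\langle 1-e\rangle$, where $e=\sum_{i\in Q_{0}}\iidemp{i}$, and (ii) in $\op{\constr}$ every simple module is forced to have infinite projective dimension. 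Requirement~(ii) forces $\constrrel$ to be \emph{not} invariant under $\mathrm{op}$: if it were, then $\op{\constr}$ would be the same construction applied to $\op{A}$, and $\findim{\op{\constr}}=0$ could not hold unless $\findim{\op{A}}=0$, which need not be so.

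For Proposition~\ref{prop:findim(A)_less_than_findim(tildeA)}, that $\findim{A}\le\findim{\constr}$, I would exploit the quotient presentation $A=\constr/\langle 1-e\rangle$. Arranging the idempotent ideal $\langle 1-e\rangle$ so that the change of rings along $\constr\twoheadrightarrow A$ is well behaved --- concretely, so that $\mathrm{Tor}^{\constr}_{>0}(A,A)=0$ and $\projdim{\constr}{A}<\infty$, for instance if $\langle 1-e\rangle$ is a stratifying ideal that is projective as a one-sided $\constr$-module --- restriction of scalars is an exact, fully faithful embedding $\fgmod{A}\hookrightarrow\fgmod{\constr}$, and applying $A\otimes_{\constr}-$ to a finite projective resolution of a finitely generated $A$-module $M$ over $\constr$ yields a projective resolution of $M$ over $A$ of no greater length. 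Hence $\projdim{A}{M}\le\projdim{\constr}{M}$ whenever the left-hand side is finite, and taking the supremum over such $M$ gives the inequality.

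The main obstacle is Proposition~\ref{prop:findim(tildeA^op)_is_zero}, that $\findim{\op{\constr}}=0$: one must show that \emph{every} finitely generated $\op{\constr}$-module of finite projective dimension is projective, not merely every simple one. The engine is the square-zero loops: if $\ell$ is a loop with $\ell^{2}=0$ at a vertex $v$, then $\simp{v}$ is a direct summand of its own first syzygy after cancelling projective summands, so $\projdim{\op{\constr}}{\simp{v}}=\infty$, and the connecting arrows are placed so that in $\op{\constr}$ the syzygy of every remaining simple likewise acquires such a self-reproducing summand. To pass from simples to arbitrary modules I would analyse minimal projective presentations over $\op{\constr}$ and show, using the precise shape of $\rad{\op{\constr}}$ imposed by the construction, that the first or second syzygy of any non-projective finitely generated $\op{\constr}$-module contains one of these loop-simples as a direct summand, forcing infinite projective dimension. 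Engineering $\constrrel$ so that this holds uniformly in $A$, while simultaneously securing requirement~(i) on which Proposition~\ref{prop:findim(A)_less_than_findim(tildeA)} rests, is the delicate point; everything else reduces to bookkeeping with quivers and relations.
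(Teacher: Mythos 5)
Your overall strategy --- construct $\constr$ from a basic algebra $A$, prove $\findim{A}\leq\findim{\constr}$, and prove $\findim{\opconstr}=0$ --- matches the paper, and your observation that the construction cannot be $\mathrm{op}$-invariant is a correct sanity check. The genuine gap is in your treatment of Proposition~\ref{prop:findim(tildeA^op)_is_zero}. You rightly note that showing every simple $\opconstr$-module has infinite projective dimension is not enough; one must handle arbitrary finitely generated modules of finite projective dimension. But your proposed fix --- that the first or second syzygy of every non-projective finitely generated $\opconstr$-module acquires a loop-simple direct summand --- is left as a plan that you yourself flag as the delicate point, and you give no argument for it. The paper bypasses the syzygy analysis entirely by proving the stronger statement $\FinDim{\opconstr}=0$ via Bass's criterion \cite[Lemma~6.2]{Bass1960}: for a finite dimensional algebra $\Lambda$, $\FinDim{\Lambda}=0$ if and only if there is a non-zero homomorphism from $\dual{\Lambda}$ to every simple $\Lambda$-module. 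That reduces the proposition to the short computation in Lemma~\ref{lemma:simples_in_top_of_dual}, which exhibits every simple $\opconstr$-module as a quotient of some $\alginj{\opconstr}{\viidemp{i}}$. This criterion is the missing ingredient; without it you are attempting a much harder direct argument whose key step is unproved.

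Two smaller points. First, you need to base-change to the algebraic closure (via \cite[Proposition~2.1]{Jensen--Lenzing--1982--LRFinDimDifferentExample}, which gives $\findim{C}\leq\findim{\tens{C}{k}{\bar{k}}}$) before passing to the basic algebra, since Construction~\ref{constr:tildeA} needs one-dimensional simples. Second, the paper realises $\constr$ directly as the triangular matrix ring $\twobytwo{A}{0}{{}_BM_A}{B}$ with $B$ a product of copies of $\quotient{k[\loopedge{}]}{\langle\loopedge{}^2\rangle}$, so Proposition~\ref{prop:findim(A)_less_than_findim(tildeA)} is a one-line citation of \cite[Corollary~4.21]{FossumGriffithReiten1975}; your stratifying-ideal route would likely work but needs the precise $\constrquiver$ and $\constrrel$ (each $\connedge{i}$ oriented from the new vertex $\tilde{i}$ into $i$, with relations $\connedge{i}a=0$ for $a\in\rad{A}$ and $\loopedge{i}\connedge{i}=0$), which you leave unspecified and which are load-bearing for both propositions.
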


A natural setting for many homological properties of rings and their modules is the derived category. Indeed, a Noetherian ring has finite global dimension if and only if its bounded derived category is generated, as a triangulated category, by the injective modules. Recently, Rickard proved that, for a finite dimensional algebra, finite finitistic dimension can also be seen as a property of the derived category \cite[Theorem~4.4]{Rickard2019}. More generally, if the unbounded derived category of a finite dimensional algebra is generated, as a triangulated category with set indexed coproducts, by the injective modules, then the finitistic dimension conjecture holds for that algebra \cite[Theorem~4.3]{Rickard2019}. In Section~\ref{sec:IG}, we use the construction $\constr$ to prove an equivalent statement to Theorem~\ref{thm:intro_little_findim} for this generation property.

\subsection*{Outline}

In Section~\ref{sec:constr}, we take a basic finite dimensional algebra $A$ over an algebraically closed field and define a related algebra $\constr$ whose homological properties are connected to those of $A$. In Section~\ref{sec:findim}, we focus on the finitistic dimensions of $A$, $\constr$, and $\opconstr$ and prove Theorem~\ref{thm:intro_little_findim}. Section~\ref{sec:IG} has the same structure as Section~\ref{sec:findim} except we focus on injective generation rather than the finitistic dimension.

\subsection*{Acknowledgements}

I would like to thank my PhD supervisor Jeremy Rickard for several useful discussions regarding this paper. I would also like to thank Carlo Klapproth and Henning Krause for conversations and email exchanges related to the contents of this paper. In particular, I am grateful to Henning Krause for communicating a simplification in the construction of $\constr$ that simplifies subsequent notation and proofs. Finally, I would like to thank Tim Burness and Peter J{\o}rgensen for their comments on preliminary versions of this paper.

\subsection*{Notation}

All rings and ring homomorphisms are unital, and modules are right modules unless otherwise stated.  Let $\Lambda$ be a finite dimensional algebra over a field $k$ with $M_{\Lambda}$ a right $\Lambda$-module.

\begin{itemize}
	\item $\Mod{{\Lambda}}$ ($\fgmod{{\Lambda}}$) is the category of (finitely generated) right ${\Lambda}$-modules.
	\item $\projdim{{\Lambda}}{M}$ is the projective dimension of $M$.
	\item Let $e \in \Lambda$ be an idempotent.
	\begin{itemize}
		\item $\dual{{\Lambda}e}$ is the injective right ${\Lambda}$-module $\Hom_k(\Hom_{\Lambda}(e\Lambda,\Lambda),k)$.
		\item $\simp[\Lambda]{e}$ is the semisimple right $\Lambda$-module $\quotient{e{\Lambda}}{\rad{e{\Lambda}}}$. When the ring $\Lambda$ is clear from context we omit the subscript. 
	\end{itemize}
	\item $\der{{\Lambda}}$ is the unbounded derived category of cochain complexes of right ${\Lambda}$-modules.
	\item $\Inj{{\Lambda}}$ is the category of injective right ${\Lambda}$-modules.
\end{itemize}

\section{The construction of \texorpdfstring{$\constr$}{A}} \label{sec:constr}

In this section, we take a finite dimensional algebra $A$ and construct a new, related algebra $\constr$. Since the finitistic dimension is invariant under field extensions \cite[Theorem~2.5]{Jensen--Lenzing--1982--LRFinDimDifferentExample} and Morita equivalence, we define $\constr$ for a basic finite dimensional algebra $A$ over an algebraically closed field $k$. To each simple $A$-module, we adjoin a copy of the finite dimensional algebra $\quotient{k[\loopedge{}]}{\langle\loopedge{}^2\rangle}$. We do this in such a way that $\constr$ can be realised as a triangular matrix ring where $A$ is one of the diagonal entries.

\begin{construction} \label{constr:tildeA}
	Let $A$ be a basic finite dimensional algebra over an algebraically closed field $k$. Let $\{e_i : 1 \leq i \leq n\} \subset A$ be a set of primitive idempotents such that $\{\simp{e_i} : 1 \leq i \leq n\}$ is an irredundant set of isomorphism classes of the simple right $A$-modules. Let $M$ be the $n$-dimensional free $k$-module with basis $\{\connedge{i} : 1 \leq i \leq n\}$. Define a right $k$-linear $A$-module structure on $M$ by
	\begin{equation*}
		\connedge{i}\iidemp{i} = \connedge{i} \text{ and } \connedge{i}a=0 \text{ for } a \in \rad{A}.
	\end{equation*}
	For each $1 \leq i \leq n$, let $B_i$ denote the local ring $\quotient{k[\loopedge{i}]}{\langle \loopedge{i}^2\rangle}$ and let $\viidemp{i}$ denote its multiplicative identity. Let $B$ denote the product ring $\prod_{i=1}^n B_i$.
	Define a left $k$-linear $B$-module structure on $M$ by
	\begin{equation*}
		\viidemp{i}\connedge{i} = \connedge{i} \text{ and } b\connedge{i}=0 \text{ for } b \in \rad{B}.
	\end{equation*}
	Then $\constr$ is the triangular matrix ring
	 \begin{equation*}
		\twobytwo{A}{0}{_BM_A}{B}.
	\end{equation*}
\end{construction}

When $A$ is a quiver algebra, Construction~\ref{constr:tildeA} can be realised via simple graph operations on the underlying quiver.

\begin{example} \label{eg:construction_of_tildeA}
    Let $A$ be the quiver algebra $\quotient{kQ}{I}$ where $k$ is an algebraically closed field, $Q$ is the quiver
    \begin{center}
        \begin{tikzpicture}
            \node (1) at (0,0) {$1$};
            \node (2) at (1,-2) {$2$};
            \node (3) at (-1,-2) {$3$};

            \draw[<-] (1) to [bend right = 20] node[midway,left] {$\delta$} (3);
            \draw[<-] (1) to [bend right = 20] node[midway,left] {$\gamma$} (2);
            \draw[<-] (2) to [bend right = 20] node[midway,right] {$\beta$} (1);
            \draw[<-] (2) to [bend left = 10] node[midway,above] {$\varepsilon$} (3);
            \draw[<-] (1) to [out = 45, in = 135, looseness = 5] node[midway,above] {$\alpha$} (1);
            \draw[<-] (3) to [out = -135, in = 135, looseness = 5] node[midway,left] {$\zeta$} (3);
        \end{tikzpicture}
    \end{center}
    and $I = \langle \alpha\beta, \beta\gamma, \gamma\alpha, \delta\beta,
    \zeta^2, \zeta\varepsilon,\alpha^3,  \varepsilon\gamma\beta, \delta\alpha^2,  \zeta\delta-\delta\alpha\rangle$. Then the indecomposable projective $A$-modules can be represented by the diagrams below. In these diagrams, the vertices represent the composition factors of the module and the arrows represent the action of the algebra on these composition factors.
    \begin{center}
        \begin{tikzpicture}
            \node (1) at (0,1) {$\iidemp{1}A$};
            \node (11) at (0,-0) {$1$};
            \node (12) at (-0.5,-1) {$1$};
            \node (13) at (0.5,-1) {$2$};
            \node (14) at (-0.5,-2) {$1$};

            \draw[->] (11) to node[midway,left] {$\alpha$} (12);
            \draw[->] (11) to node[midway,right] {$\beta$} (13);
            \draw[->] (12) to node[midway,left] {$\alpha$} (14);

            \begin{scope}[xshift = 1cm]
                \node (2) at (1.5,1) {$\iidemp{2}A$};

                \node (21) at (1.5,-0) {$2$};
                \node (22) at (1.5,-1) {$1$};
                \node (24) at (1.5,-2) {$2$};

                \draw[->] (21) to node[midway,right] {$\gamma$} (22);
                \draw[->] (22) to node[midway,right] {$\beta$} (24);
            \end{scope}

            \begin{scope}[xshift = 1cm]
                \node (3) at (4.5,1) {$\iidemp{3}{A}$};

                \node (31) at (4.5,-0) {$3$};
                \node (32) at (3.5,-1) {$2$};
                \node (33) at (3.5,-2) {$1$};
                \node (34) at (4.5,-1) {$1$};
                \node (35) at (5.5,-1) {$3$};
                \node (36) at (5,-2) {$1$};

                \draw[->] (31) to node[midway,left] {$\varepsilon$} (32);
                \draw[->] (32) to node[midway,left] {$\gamma$} (33);
                \draw[->] (31) to node[midway,left] {$\delta$} (34);
                \draw[->] (31) to node[midway,right] {$\zeta$} (35);
                \draw[->] (34) to node[midway,left] {$\alpha$} (36);
                \draw[->] (35) to node[midway,right] {$\delta$} (36);
            \end{scope}
        \end{tikzpicture}
    \end{center}
    
    The injective $A$-modules can be represented by the diagrams below.
    \begin{center}
        \begin{tikzpicture}
            \node (1) at (0,0) {$\alginj{A}{e_1}$};

            \node (11) at (0,1) {$1$};
            \node (12) at (-1,2) {$2$};
            \node (13) at (0,2) {$1$};
            \node (14) at (1,2) {$3$};
            \node (15) at (-0.5,3) {$1$};
            \node (17) at (0.5,3) {$3$};
            \node (19) at (-1.5,3) {$3$};

            \draw[<-] (11) to node[midway,left] {$\gamma$} (12);
            \draw[<-] (11) to node[midway,left] {$\alpha$} (13);
            \draw[<-] (11) to node[midway,right] {$\delta$} (14);
            \draw[<-] (13) to node[midway,left] {$\alpha$} (15);
            \draw[<-] (13) to node[midway,right] {$\delta$} (17);
            \draw[<-] (14) to node[midway,right] {$\zeta$} (17);
            \draw[<-] (12) to node[midway,left] {$\varepsilon$} (19);

            \node (2) at (3.5,0) {$\alginj{A}{e_2}$};

            \node (21) at (3.5,1) {$2$};
            \node (22) at (3,2) {$1$};
            \node (23) at (3,3) {$2$};
            \node (24) at (4,2) {$3$};

            \draw[<-] (21) to node[midway,left] {$\beta$} (22);
            \draw[<-] (21) to node[midway,right] {$\varepsilon$} (24);
            \draw[<-] (22) to node[midway,left] {$\gamma$} (23);

            \node (3) at (6,0) {$\alginj{A}{e_3}$};

            \node (31) at (6,1) {$3$};
            \node (32) at (6,2) {$3$};

            \draw[<-] (31) to node[midway,right] {$\zeta$} (32);
        \end{tikzpicture}
    \end{center}

    The simple $A$-modules correspond to the vertices of the quiver $Q$. Therefore, in this case, $M$ is a $3$-dimensional $k$-vector space with basis $\{\connedge{1}, \connedge{2}, \connedge{3} \} $, where $\viidemp{i} \connedge{i} \iidemp{i} = \connedge{i}$ for each $1 \leq i \leq 3$. Moreover, $B$ is a product of three copies of $\quotient{k[\loopedge{}]}{\langle\loopedge{}^2\rangle}$ and $\rad{\quotient{k[\loopedge{}]}{\langle\loopedge{}^2\rangle}}$ is equal to $\vspan{k}{\loopedge{}}$. So, we can realise $\constr$ as the quiver algebra  $\quotient{k\constrquiver}{\constrrel}$ where $\constrquiver$ is the quiver
    \begin{center}
        \begin{tikzpicture}
            \node (1) at (0,0) {$1$};
            \node (2) at (1,-2) {$2$};
            \node (3) at (-1,-2) {$3$};

            \draw[<-] (1) to [bend right = 20] node[midway,left] {$\delta$} (3);
            \draw[<-] (1) to [bend right = 20] node[midway,left] {$\gamma$} (2);
            \draw[<-] (2) to [bend right = 20] node[midway,right] {$\beta$} (1);
            \draw[<-] (2) to [bend left = 10] node[midway,above] {$\varepsilon$} (3);
            \draw[<-] (1) to [out = 45, in = 135, looseness = 5] node[midway,above] {$\alpha$} (1);
            \draw[<-] (3) to [out = -135, in = 135, looseness = 5] node[midway,left] {$\zeta$} (3);

            \node (v1) at (1.5,0) {\textcolor{Emerald}{$\tilde{1}$}};

            \node (v2) at (2.5,-2) {\textcolor{Emerald}{$\tilde{2}$}};

            \node (v3) at (-1,-3.5) {\textcolor{Emerald}{$\tilde{3}$}};

            \draw[<-,Emerald] (1) to node[midway,above] {$\connedge{1}$} (v1);
            \draw[->,Emerald] (v1) to [out = -45, in = 45, looseness = 5] node[midway,right] {$\loopedge{1}$} (v1);

            \draw[<-,Emerald] (2) to node[midway,above] {$\connedge{2}$} (v2);
            \draw[->,Emerald] (v2) to [out = -45, in = 45, looseness = 5] node[midway,right] {$\loopedge{2}$} (v2);

            \draw[<-,Emerald] (3) to node[midway,right] {$\connedge{3}$} (v3);
            \draw[->,Emerald] (v3) to [out = -135, in = -45, looseness = 5] node[midway,below] {$\loopedge{3}$} (v3);
        \end{tikzpicture}
    \end{center}
    and
    \begin{equation*}
        \constrrel = \langle r, \connedge{i}a, \loopedge{i}\connedge{i}, \loopedge{i}^2 : r \in I, a \in \rad{A}, 1 \leq i \leq 3 \rangle.
    \end{equation*}
    The projective $\constr$-modules can be represented by the diagrams below.
    \begin{center}
        \begin{tikzpicture}
            \node (1) at (0,1) {$\iidemp{1}\constr$};
            \node (11) at (0,0) {$1$};
            \node (12) at (-0.5,-1) {$1$};
            \node (13) at (0.5,-1) {$2$};
            \node (14) at (-0.5,-2) {$1$};

            \draw[->] (11) to (12);
            \draw[->] (11) to (13);
            \draw[->] (12) to (14);
            
            \begin{scope}[xshift = 1cm]
                \node (2) at (1,1) {$\iidemp{2}\constr$};

                \node (21) at (1,-0) {$2$};
                \node (22) at (1,-1) {$1$};
                \node (24) at (1,-2) {$2$};

                \draw[->] (21) to (22);
                \draw[->] (22) to (24);
            \end{scope}

            \begin{scope}[xshift = 0cm]
                \node (3) at (4.5,1) {$\iidemp{3}\constr$};

                \node (31) at (4.5,-0) {$3$};
                \node (32) at (3.5,-1) {$2$};
                \node (33) at (3.5,-2) {$1$};
                \node (34) at (4.5,-1) {$1$};
                \node (35) at (5.5,-1) {$3$};
                \node (36) at (5,-2) {$1$};

                \draw[->] (31) to (32);
                \draw[->] (32) to (33);
                \draw[->] (31) to (34);
                \draw[->] (31) to (35);
                \draw[->] (34) to (36);
                \draw[->] (35) to (36);
            \end{scope}

            \begin{scope}[xshift = 7.5cm]
                \node (v) at (0,1) {$\viidemp{1}\constr$};

                \node (v1) at (0,0) {$\textcolor{Emerald}{\tilde{1}}$};
                \node (1v1) at (-0.5,-1) {$1$};
                \node (uv1) at (0.5,-1) {$\textcolor{Emerald}{\tilde{1}}$};

                \draw[->,Emerald] (v1) to node[midway,left] {\textcolor{Emerald}{$\connedge{1}$}} (1v1);
                \draw[->,Emerald] (v1) to node[midway,right] {\textcolor{Emerald}{$\loopedge{1}$}} (uv1);
            \end{scope}

            \begin{scope}[xshift = 10cm]
                \node (v) at (0,1) {$\viidemp{2}\constr$};

                \node (v1) at (0,0) {$\textcolor{Emerald}{\tilde{2}}$};
                \node (1v1) at (-0.5,-1) {$2$};
                \node (uv1) at (0.5,-1) {$\textcolor{Emerald}{\tilde{2}}$};

                \draw[->,Emerald] (v1) to node[midway,left] {\textcolor{Emerald}{$\connedge{2}$}} (1v1);
                \draw[->,Emerald] (v1) to node[midway,right] {\textcolor{Emerald}{$\loopedge{2}$}} (uv1);
            \end{scope}

            \begin{scope}[xshift = 12.5cm]
                \node (v) at (0,1) {$\viidemp{3}\constr$};

                \node (v1) at (0,0) {$\textcolor{Emerald}{\tilde{3}}$};
                \node (1v1) at (-0.5,-1) {$3$};
                \node (uv1) at (0.5,-1) {$\textcolor{Emerald}{\tilde{3}}$};

                \draw[->,Emerald] (v1) to node[midway,left] {\textcolor{Emerald}{$\connedge{3}$}} (1v1);
                \draw[->,Emerald] (v1) to node[midway,right] {\textcolor{Emerald}{$\loopedge{3}$}} (uv1);
            \end{scope}
        \end{tikzpicture}
    \end{center}
    The injective $\constr$-modules can be represented by the diagrams below.
    \begin{center}
        \begin{tikzpicture}
            \node (1) at (0,0) {$\alginj{\constr}{e_1}$};

            \node (11) at (0,1) {$1$};
            \node (12) at (-0.5,2) {$2$};
            \node (13) at (0.5,2) {$1$};
            \node (14) at (1.5,2) {$3$};
            \node (15) at (0,3) {$1$};
            \node (17) at (1,3) {$3$};
            \node (19) at (-1,3) {$3$};

            \draw[<-] (11) to (12);
            \draw[<-] (11) to (13);
            \draw[<-] (11) to (14);
            \draw[<-] (13) to (15);
            \draw[<-] (13) to (17);
            \draw[<-] (14) to (17);
            \draw[<-] (12) to (19);

            \node (110) at (-1.5,2) {$\textcolor{Emerald}{\tilde{1}}$};

            \draw[<-,Emerald] (11) to node[midway,left] {$\textcolor{Emerald}{\connedge{1}}$} (110);

            \node (2) at (4,0) {$\alginj{\constr}{e_2}$};

            \node (21) at (4,1) {$2$};
            \node (22) at (4,2) {$1$};
            \node (23) at (4,3) {$2$};
            \node (24) at (5,2) {$3$};

            \draw[<-] (21) to (22);
            \draw[<-] (21) to (24);
            \draw[<-] (22) to (23);

            \node (27) at (3,2) {$\textcolor{Emerald}{\tilde{2}}$};

            \draw[<-,Emerald] (21) to node[midway,left] {$\textcolor{Emerald}{\connedge{2}}$} (27);

            \node (3) at (7,0) {$\alginj{\constr}{e_3}$};

            \node (31) at (7,1) {$3$};
            \node (32) at (7.5,2) {$3$};

            \draw[<-] (31) to (32);

            \node (33) at (6.5,2) {$\textcolor{Emerald}{\tilde{3}}$};

            \draw[<-,Emerald] (31) to node[midway,left] {$\textcolor{Emerald}{\connedge{3}}$} (33);

            \begin{scope}[xshift = 9cm]
                \node (v1) at (0,0) {$\alginj{\constr}{\viidemp{1}}$};

                \node (v11) at (0,1) {$\textcolor{Emerald}{\tilde{1}}$};
                \node (v13) at (0,2) {$\textcolor{Emerald}{\tilde{1}}$};

                \draw[<-,Emerald] (v11) to node[midway,right] {$\textcolor{Emerald}{\loopedge{1}}$} (v13);

                \node (v2) at (1.5,0) {$\alginj{\constr}{\viidemp{2}}$};

                \node (v21) at (1.5,1) {$\textcolor{Emerald}{\tilde{2}}$};
                \node (v23) at (1.5,2) {$\textcolor{Emerald}{\tilde{2}}$};

                \draw[<-,Emerald] (v21) to node[midway,right] {$\textcolor{Emerald}{\loopedge{2}}$} (v23);

                \node (v3) at (3,0) {$\alginj{\constr}{\viidemp{3}}$};

                \node (v31) at (3,1) {$\textcolor{Emerald}{\tilde{3}}$};
                \node (v33) at (3,2) {$\textcolor{Emerald}{\tilde{3}}$};

                \draw[<-,Emerald] (v31) to node[midway,right] {$\textcolor{Emerald}{\loopedge{3}}$} (v33);
            \end{scope}
        \end{tikzpicture}
    \end{center}
\end{example}

\begin{lemma} \label{lemma:simples_in_top_of_dual}
    Let $A$ be a basic finite dimensional algebra over an algebraically closed field $k$, with $\constr$ defined as in Construction~\ref{constr:tildeA}. Then each simple right $\opconstr$-module is isomorphic to $\simp{f}$ for some $f \in \{\iidemp{i}, \viidemp{i} : 1 \leq i \leq n \}$. Moreover, for each $1 \leq i \leq n$,
    there exists a short exact sequence of right $\opconstr$-modules
    \begin{equation*}
        0 \xrightarrow[\hspace{3.2em}]{} 
        \simp{\viidemp{i}} \xrightarrow[\hspace{3.2em}]{} 
        \alginj{\opconstr}{\viidemp{i}} \xrightarrow[\hspace{3.25em}]{\onebytwo{\connedge{i}}{\loopedge{i}}}
        \simp{\iidemp{i}} \oplus \simp{\viidemp{i}}
        \xrightarrow[\hspace{3.2em}]{} 
        0.
    \end{equation*}
\end{lemma}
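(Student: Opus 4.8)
The statement has two parts. The first is formal. Recall that the $\iidemp{i}$, together with the identities $\viidemp{i}$ of the local rings $B_i$ --- all viewed as the obvious diagonal idempotents of the triangular matrix ring $\constr$ --- form a complete set of primitive orthogonal idempotents of $\constr$ (this uses that the $\iidemp{i}$ form such a set in $A$). Orthogonality and the fact that they sum to $1$ are immediate from block matrix multiplication, and primitivity holds because the corner rings $\iidemp{i}\constr\iidemp{i} \cong \iidemp{i}A\iidemp{i}$ and $\viidemp{i}\constr\viidemp{i} \cong B_i$ are local. Since primitivity, orthogonality and completeness of a set of idempotents are preserved under passing to the opposite ring, the same set is a complete set of primitive orthogonal idempotents of $\opconstr$; hence any simple right $\opconstr$-module is a quotient of $f\opconstr$, and therefore of $f\opconstr/\rad{f\opconstr} = \simp{f}$, for one of these idempotents $f$, and as $\simp{f}$ is already simple it is isomorphic to $\simp{f}$.

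For the short exact sequence, the key reduction is that $\alginj{\opconstr}{\viidemp{i}} = \dual{\opconstr\viidemp{i}}$ is (isomorphic to) the $k$-dual $\Hom_k(\opconstr\viidemp{i}, k)$, and that the left $\opconstr$-module $\opconstr\viidemp{i}$ is, under the standard identification of left $\opconstr$-modules with right $\constr$-modules, the indecomposable projective right $\constr$-module $\viidemp{i}\constr$. So the plan is to describe $\viidemp{i}\constr$ and then apply $\dual{-} = \Hom_k(-, k)$. As $\constr$ is a triangular matrix ring, $\rad{\constr}$ has diagonal blocks $\rad{A}$ and $\rad{B}$ and lower-left block $M$, so $\rad{\viidemp{i}\constr} = \viidemp{i}\rad{\constr}$; using $\viidemp{i}M = k\connedge{i}$ and $\viidemp{i}\rad{B} = \rad{B_i} = k\loopedge{i}$, this radical has underlying space $k\connedge{i} \oplus k\loopedge{i}$ --- with $\connedge{i}$ and $\loopedge{i}$ regarded as elements of $\viidemp{i}\constr$ --- and $\viidemp{i}\constr/\rad{\viidemp{i}\constr} = \simp[\constr]{\viidemp{i}}$ by definition. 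The relations $\connedge{j}a = 0$ for $a \in \rad{A}$, $b\connedge{j} = 0$ for $b \in \rad{B}$, and $\loopedge{i}^{2} = 0$ force $M \cdot \rad{A}$, $\rad{B} \cdot M$ and $(\rad{B})^{2}$ to vanish, so $\radpower{2}{\constr}$ is concentrated in the upper-left block; in particular $\viidemp{i}\radpower{2}{\constr} = 0$, so $\rad{\viidemp{i}\constr}$ is semisimple. One checks directly that $k\connedge{i}$ and $k\loopedge{i}$ are submodules of $\viidemp{i}\constr$, isomorphic respectively to $\simp[\constr]{\iidemp{i}}$ and $\simp[\constr]{\viidemp{i}}$; hence $\rad{\viidemp{i}\constr} \cong \simp[\constr]{\iidemp{i}} \oplus \simp[\constr]{\viidemp{i}}$.

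Applying $\dual{-}$ to the radical sequence $0 \to \rad{\viidemp{i}\constr} \to \viidemp{i}\constr \to \simp[\constr]{\viidemp{i}} \to 0$ now yields the asserted sequence: $\dual{-}$ is exact, commutes with finite direct sums, and satisfies $\dual{\simp[\constr]{f}} \cong \simp{f}$ for each of the idempotents $f$ above (both are the unique simple right $\opconstr$-module on which $f$ acts as the identity), so we obtain a short exact sequence $0 \to \simp{\viidemp{i}} \to \alginj{\opconstr}{\viidemp{i}} \to \simp{\iidemp{i}} \oplus \simp{\viidemp{i}} \to 0$ of right $\opconstr$-modules. Finally, the surjection here is $\dual{-}$ applied to the inclusion $\rad{\viidemp{i}\constr} \hookrightarrow \viidemp{i}\constr$; since the subspace $\rad{\viidemp{i}\constr} \subseteq \viidemp{i}\constr$ is spanned by $\connedge{i}$ and $\loopedge{i}$, this surjection sends $\varphi \in \Hom_k(\viidemp{i}\constr, k)$ to $(\varphi(\connedge{i}), \varphi(\loopedge{i}))$, which --- after identifying its two coordinates with generators of $\simp{\iidemp{i}}$ and $\simp{\viidemp{i}}$ --- is the map $\onebytwo{\connedge{i}}{\loopedge{i}}$.

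The matrix bookkeeping above --- the vanishing of the three products, and the identification of the two summands of $\rad{\viidemp{i}\constr}$ --- is routine. The step most likely to need care is the last one: recovering not merely the abstract short exact sequence but the specific map $\onebytwo{\connedge{i}}{\loopedge{i}}$, that is, checking that $\dual{-}$ and the identification $\opconstr\viidemp{i} \cong \viidemp{i}\constr$ are compatible in the way described above.
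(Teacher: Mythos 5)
Your proof is correct and follows essentially the same route as the paper's: both compute $\rad{\viidemp{i}\constr}$ via the block structure of $\rad{\constr}$, use the vanishing of $M\rad{A}$, $\rad{B}M$, and $\rad{B}^2$ to see it is semisimple, identify the two simple summands via the actions $\connedge{i}\iidemp{i}=\connedge{i}$ and $\loopedge{i}\viidemp{i}=\loopedge{i}$, and then dualise the radical short exact sequence for $\viidemp{i}\constr$ using $\viidemp{i}\constr = \opconstr\viidemp{i}$ and $D=\Hom_k(-,k)$. You spell out two points the paper cites or leaves implicit — why the idempotents of a triangular matrix ring remain a complete set of primitive orthogonal idempotents in the opposite ring, and why the resulting surjection is literally evaluation at $\connedge{i}$ and $\loopedge{i}$ — but this is elaboration, not a different argument.
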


\begin{proof}
    The primitive idempotents of a triangular matrix ring correspond to the primitive idempotents of its diagonal components. Therefore, the primitive idempotents of $\opconstr$ are $\iidemp{i}$ and $\viidemp{i}$ for $1 \leq i \leq n$. Consequently, the simple right $\opconstr$-modules are of the form $\simp[\opconstr]{f}$ where $f$ is either $\iidemp{i}$ or $\viidemp{i}$ for some $1 \leq i \leq n$.

    For each $1 \leq i \leq n$, the radical of $B_i$ is $\vspan{k}{\loopedge{i}}$. So, it follows that the radical of $\constr$ is 
    \begin{equation*}
        \vspan{k}{ a, \connedge{i}, \loopedge{i} : a \in \rad{A}, 1 \leq i \leq n}.
    \end{equation*}
    Fix $1 \leq i \leq n$. Then $\rad{\viidemp{i}\constr}$ is equal to $\vspan{k}{\connedge{i}, \loopedge{i}}$ as an $\constr$-module. Moreover,  $\radpower{2}{\viidemp{i}\constr}$ is trivial, so $\rad{\viidemp{i}\constr}$ is semisimple. Since the primitive idempotents of $\constr$ are $\iidemp{j}$ and $\viidemp{j}$ for $1 \leq j \leq n$ and
    \begin{equation*}
        \connedge{i} \iidemp{i} = \connedge{i} \text{ and } \loopedge{i} \viidemp{i} = \loopedge{i},
    \end{equation*}
    it follows that $\rad{\viidemp{i}\constr}$ decomposes as a right $\constr$-module to $\simp[\constr]{\iidemp{i}} \oplus \simp[\constr]{\viidemp{i}}$. So, there exists a short exact sequence of right $\constr$-modules
    \begin{equation*}
        0 \xrightarrow[\hspace{3.2em}]{} 
        \simp[\constr]{\iidemp{i}} \oplus \simp[\constr]{\viidemp{i}} \xrightarrow[\hspace{3.2em}]{\twobyone{\connedge{i}}{\loopedge{i}}} 
        \viidemp{i}\constr \xrightarrow[\hspace{3.25em}]{}
        \simp[\constr]{\viidemp{i}}
        \xrightarrow[\hspace{3.2em}]{} 
        0.
    \end{equation*}
    Note that $\viidemp{i}\constr$ is equal to $\opconstr\viidemp{i}$ and $\constr$ is a finite dimensional algebra over $k$. So, the application of the duality functor 
    \begin{equation*}
        D = \function{\Hom_k(-,k)}{\fgmod{\constr}}{\fgmod{\opconstr}}
    \end{equation*}
    to this sequence yields the required short exact sequence of right $\opconstr$-modules.
\end{proof}

\section{The finitistic dimension} \label{sec:findim}

In this section, we focus on the properties of the finitistic dimensions of the finite dimensional algebras $A$, $\constr$, and $\opconstr$.  We start with the definition of the finitistic dimension.

\begin{definition}[The (little/big) finitistic dimension] \label{defn:findim}
    Let $\Lambda$ be a finite dimensional algebra. Then the little finitistic dimension of $\Lambda$ is
    \begin{equation*}
        \findim{\Lambda} = \sup \{ \projdim{\Lambda}{M} : M \in \fgmod{\Lambda} \text{ and } \projdim{\Lambda}{M} < \infty  \}.
    \end{equation*}
    The big finitistic dimension of $\Lambda$ is
    \begin{equation*}
        \FinDim{\Lambda} = \sup \{ \projdim{\Lambda}{M} : M \in \Mod{\Lambda} \text{ and } \projdim{\Lambda}{M} < \infty \}.
    \end{equation*}
\end{definition}

The main result of this section, namely Theorem~\ref{thm:little_findim}, concerns the relationship between the finitistic dimension of a finite dimensional algebra and its opposite algebra. The proof of the result is the combination of two propositions that we state and prove now. In particular, we show that  that $\findim{A} \leq \findim{\constr}$ in Proposition~\ref{prop:findim(A)_less_than_findim(tildeA)} and that $\findim{\opconstr} = 0$ in Proposition~\ref{prop:findim(tildeA^op)_is_zero}.

\begin{proposition} \label{prop:findim(A)_less_than_findim(tildeA)}
    Let $A$ be a basic finite dimensional algebra over an algebraically closed field, with $\constr$ defined as in Construction~\ref{constr:tildeA}. Then $\findim{A} \leq \findim{\constr}$ and $\FinDim{A} \leq \FinDim{\constr}$.
\end{proposition}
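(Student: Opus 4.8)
The plan is to exhibit $A$ as a quotient ring of $\constr$ and to push projective resolutions over $A$ up to $\constr$ along the resulting restriction functor, showing that this operation preserves projective dimension exactly. Since $\constr$ is the triangular matrix ring $\twobytwo{A}{0}{M}{B}$ of Construction~\ref{constr:tildeA}, the subset $\twobytwo{0}{0}{M}{B}$ is a two-sided ideal and projection onto the upper-left entry is a surjective ring homomorphism $\function{\pi}{\constr}{A}$. Let $\function{\pi^{*}}{\Mod{A}}{\Mod{\constr}}$ denote restriction of scalars along $\pi$. Since $\pi$ is surjective, $\pi^{*}$ is a fully faithful exact functor that restricts to a functor $\fgmod{A} \rightarrow \fgmod{\constr}$ and commutes with arbitrary direct sums. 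Writing $e = \twobytwo{1}{0}{0}{0} \in \constr$, one checks that $\ker \pi = (1-e)\constr$ and $e\constr \cong \pi^{*}(A_{A})$ as right $\constr$-modules; in particular $\pi^{*}$ carries projective right $A$-modules, being summands of free modules, to projective right $\constr$-modules.

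The heart of the argument is the identity
\begin{equation*}
    \projdim{\constr}{\pi^{*}(X)} = \projdim{A}{X} \qquad \text{for all } X \in \Mod{A}.
\end{equation*}
For the inequality ``$\leq$'', I would apply the exact functor $\pi^{*}$ to a projective resolution of $X$ of length $\projdim{A}{X}$; by the previous paragraph this yields a projective resolution of $\pi^{*}(X)$ of the same length. For ``$\geq$'', computing $\operatorname{Ext}$ groups over $\constr$ from the $\pi^{*}$-image of a projective resolution of $X$ and invoking full faithfulness of $\pi^{*}$ produces natural isomorphisms $\operatorname{Ext}^{n}_{\constr}(\pi^{*}(X), \pi^{*}(Y)) \cong \operatorname{Ext}^{n}_{A}(X, Y)$ for every $n \geq 0$ and every $Y \in \Mod{A}$. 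If $\projdim{A}{X} = d < \infty$, then choosing $Y$ with $\operatorname{Ext}^{d}_{A}(X, Y) \neq 0$ forces $\projdim{\constr}{\pi^{*}(X)} \geq d$, giving the reverse inequality.

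Granting the identity, the proposition follows at once: every $X \in \fgmod{A}$ with $\projdim{A}{X} = d < \infty$ gives rise to a module $\pi^{*}(X) \in \fgmod{\constr}$ with $\projdim{\constr}{\pi^{*}(X)} = d < \infty$, so taking suprema yields $\findim{A} \leq \findim{\constr}$; running the same argument over all of $\Mod{A}$ gives $\FinDim{A} \leq \FinDim{\constr}$. I do not expect a serious obstacle here. The only points that require care are the elementary bookkeeping with the triangular matrix ring $\constr$ — identifying $\ker \pi$ and the isomorphism $e\constr \cong \pi^{*}(A_{A})$, and confirming that finite generation is preserved by $\pi^{*}$ — together with the standard facts that restriction of scalars along a surjective ring homomorphism is exact and fully faithful.
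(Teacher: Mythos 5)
Your proof is correct, but it takes a different route from the paper. The paper's proof is a one-line citation to a general result about triangular matrix rings, namely \cite[Corollary~4.21]{FossumGriffithReiten1975}, which directly gives bounds on the finitistic dimensions of $\twobytwo{A}{0}{M}{B}$ in terms of those of $A$ and $B$. Your approach instead proves the needed inequality from scratch: you exploit the fact that $\ker\pi = (1-e)\constr$ is generated by an idempotent, so that $\pi^{*}(A_A) \cong e\constr$ is projective over $\constr$. This is precisely the feature of triangular matrix rings that makes restriction of scalars along $\pi$ preserve projectivity (note that this is \emph{not} true for an arbitrary surjection of rings --- consider $k[\ell]/\langle\ell^2\rangle \twoheadrightarrow k$), and combined with exactness and full faithfulness of $\pi^{*}$ it yields $\projdim{\constr}{\pi^{*}(X)} = \projdim{A}{X}$ for all $X$, from which both inequalities follow. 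Your argument is essentially how one proves the relevant half of the Fossum--Griffith--Reiten bound, so it is genuinely a self-contained substitute for the citation. What the paper's approach buys is brevity and a pointer to the sharper two-sided bounds in the literature; what yours buys is a transparent and elementary proof that does not require the reader to unpack the module-theoretic description of a triangular matrix ring used in the reference. All the individual steps in your argument --- the identification of $\ker\pi$, the isomorphism $e\constr\cong\pi^{*}(A_A)$, the preservation of projective resolutions under $\pi^{*}$, and the $\operatorname{Ext}$ comparison via full faithfulness --- are correct.
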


\begin{proof}
    Since $\constr$ is a triangular matrix ring, the statements follow by \cite[Corollary~4.21]{FossumGriffithReiten1975}.
\end{proof}

\begin{proposition} \label{prop:findim(tildeA^op)_is_zero}
    Let $A$ be a basic finite dimensional algebra over an algebraically closed field, with $\constr$ defined as in Construction~\ref{constr:tildeA}. Then $\findim{\opconstr} = 0$ and $\FinDim{\opconstr} = 0$.
\end{proposition}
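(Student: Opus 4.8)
The plan is to show that every finitely generated right $\opconstr$-module of finite projective dimension is actually projective, which gives $\findim{\opconstr} = 0$; the big version will follow by the same argument applied to arbitrary modules, or by invoking that $\FinDim$ equals $\findim$ here since the relevant obstruction is the same. The key structural input is Lemma~\ref{lemma:simples_in_top_of_dual}: the simple right $\opconstr$-modules are exactly the $\simp{f}$ for $f \in \{\iidemp{i}, \viidemp{i}\}$, and for each $i$ the module $\simp{\viidemp{i}}$ sits as the kernel of a surjection from the injective $\alginj{\opconstr}{\viidemp{i}}$ onto $\simp{\iidemp{i}} \oplus \simp{\viidemp{i}}$. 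So I would first argue that it suffices to control the projective dimensions of the simple modules, and in fact only of the $\simp{\viidemp{i}}$: if some simple had finite nonzero projective dimension the finitistic dimension would be positive, and conversely if all simples of finite projective dimension are projective then by an induction up a composition series (using that $\projdim$ of a module is bounded by the max of the $\projdim$s of its composition factors, via the long exact sequence) every module of finite projective dimension is projective.

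Next I would compute the syzygies of the non-projective simples. The simples $\simp{\iidemp{i}}$ (coming from the $A$-side) should have infinite projective dimension or else reduce to the $\simp{\viidemp{i}}$ case; the crucial point is the short exact sequence of Lemma~\ref{lemma:simples_in_top_of_dual}. Reading it as a statement about syzygies of $\simp{\iidemp{i}} \oplus \simp{\viidemp{i}}$: its first syzygy (up to projectives, since $\alginj{\opconstr}{\viidemp{i}}$ need not be projective — here I must be careful) involves $\simp{\viidemp{i}}$ again. More usefully, I would look directly at the projective cover of $\simp{\viidemp{i}}$ as a right $\opconstr$-module and identify its radical; the ring $\opconstr$ is the triangular matrix ring $\twobytwo{\op{A}}{\op{M}}{0}{\op{B}}$ (transpose of Construction~\ref{constr:tildeA}), and the copy of $\op{B_i} = \quotient{k[\loopedge{i}]}{\langle\loopedge{i}^2\rangle}$ embedded along the diagonal forces a loop at each vertex $\tilde{\imath}$ with relation $\loopedge{i}^2 = 0$. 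That loop makes $\simp{\viidemp{i}}$ have infinite projective dimension: its syzygy contains a copy of $\simp{\viidemp{i}}$ as a direct summand (the $\loopedge{i}$-component), so the projective dimension can never terminate.

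Then I would handle $\simp{\iidemp{i}}$: applying the duality $D$ to the indecomposable projective right $\constr$-modules $\iidemp{i}\constr$ (whose composition factors are visible from the $A$-side structure, none of which involve any $\viidemp{j}$) shows that the injective envelope — hence, by walking down, the syzygy structure — of $\simp{\iidemp{i}}$ as a right $\opconstr$-module feeds into modules supported on the $A$-part, and these in turn, via the connecting bimodule $M$, have syzygies hitting the $\simp{\viidemp{j}}$'s. So any simple with a nonzero syzygy eventually produces a $\simp{\viidemp{j}}$ summand in some syzygy, which by the previous paragraph has infinite projective dimension; thus no non-projective simple has finite projective dimension. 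Combining, the only finitely generated modules of finite projective dimension are the projectives, so $\findim{\opconstr} = 0$, and rerunning the composition-factor reduction for arbitrary modules (every module is a union of finitely generated submodules, and a module of finite projective dimension over an Artin algebra still has its projective dimension detected by finitely generated pieces / by $\mathrm{Ext}$ against simples) gives $\FinDim{\opconstr} = 0$.

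The main obstacle I anticipate is that $\alginj{\opconstr}{\viidemp{i}}$ and the other injective $\opconstr$-modules appearing are generally \emph{not} projective, so I cannot read off syzygies of simples directly from the short exact sequence in Lemma~\ref{lemma:simples_in_top_of_dual} — that sequence has an injective, not a projective, in the middle. The real work is therefore to pin down the projective covers and first syzygies of each simple right $\opconstr$-module from the triangular-matrix description, and to make the "every syzygy chain eventually meets some $\simp{\viidemp{j}}$'' claim precise; once the loop $\loopedge{i}$ with $\loopedge{i}^2 = 0$ is in hand, the infinitude of $\projdim{\opconstr}{\simp{\viidemp{i}}}$ is immediate, but channeling every other non-projective simple into that case is where care is needed.
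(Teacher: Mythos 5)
Your proposal diverges significantly from the paper and contains a genuine gap in the opening reduction. The paper's proof is a one-liner: it invokes Bass's criterion \cite[Lemma~6.2]{Bass1960}, which says that for a finite dimensional algebra $\Lambda$ one has $\FinDim{\Lambda}=0$ if and only if every simple $\Lambda$-module receives a nonzero homomorphism from $\dual{\Lambda}$. Lemma~\ref{lemma:simples_in_top_of_dual} was set up precisely to feed into this criterion: it exhibits each simple right $\opconstr$-module $\simp{\iidemp{i}}$ and $\simp{\viidemp{i}}$ as a quotient of the injective $\alginj{\opconstr}{\viidemp{i}}$, which is a direct summand of $\dual{\opconstr}$, so Bass's criterion applies immediately and $\FinDim{\opconstr}=0$ (and then $\findim{\opconstr}\le\FinDim{\opconstr}=0$). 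The sequence in Lemma~\ref{lemma:simples_in_top_of_dual} is not intended as a syzygy computation at all, which is why having an injective rather than a projective in the middle is not an obstacle in the paper's argument; you correctly flagged this as a problem for a syzygy-based reading, but did not see the intended use.

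The gap in your approach is the reduction to simples. You claim that if every simple of finite projective dimension is projective, then by induction along a composition series every module of finite projective dimension is projective, using the inequality $\projdim{\opconstr}{M}\le\max_i\projdim{\opconstr}{S_i}$ over the composition factors $S_i$. That inequality is true, but it points in the wrong direction: a module $M$ of finite (even zero) projective dimension can perfectly well have composition factors all of infinite projective dimension --- the regular module itself is the standard example. So knowing $\projdim{\opconstr}{M}<\infty$ tells you nothing about $\projdim{\opconstr}{S_i}$, and your induction never gets off the ground. Unlike the global dimension, the finitistic dimension is \emph{not} controlled by the simple modules in this naive way, and this is exactly the subtlety that Bass's criterion is designed to handle. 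Your observation that the loop $\loopedge{i}$ with $\loopedge{i}^2=0$ forces $\projdim{\opconstr}{\simp{\viidemp{i}}}=\infty$ is correct but does not, on its own, bound the finitistic dimension; to salvage an argument along your lines you would need either Bass's lemma or some other device that converts a statement about quotients of injectives (or about projective covers) into a statement about \emph{all} finite projective dimension modules.
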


\begin{proof}
	Since $\findim{\opconstr} \leq \FinDim{\opconstr}$, it suffices to prove that $\FinDim{\opconstr} = 0$. Suppose that $\Lambda$ is a finite dimensional algebra over a field. Then $\FinDim{\Lambda}=0$ if and only if there is a non-zero homomorphism from the dual module $\dual{\Lambda}$ to every simple $\Lambda$-module \cite[Lemma~6.2]{Bass1960}. Therefore, by Lemma~\ref{lemma:simples_in_top_of_dual}, it follows that $\FinDim{\opconstr}=0$.
\end{proof}

\begin{theorem} \label{thm:little_findim}
    The little finitistic dimension conjecture holds for all finite dimensional algebras if and only if, for all finite dimensional algebras $\Lambda$, $\findim{\Lambda}<\infty$ implies that $\findim{\op{\Lambda}} < \infty$.
\end{theorem}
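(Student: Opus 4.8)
The plan is to derive the theorem from the two propositions already established, namely Proposition~\ref{prop:findim(A)_less_than_findim(tildeA)} and Proposition~\ref{prop:findim(tildeA^op)_is_zero}, together with the standard reductions for the finitistic dimension conjecture. The forward implication is trivial: if $\findim{\Lambda}<\infty$ for \emph{all} finite dimensional $\Lambda$, then in particular $\findim{\op{\Lambda}}<\infty$, so the hypothesis ``$\findim{\Lambda}<\infty$ implies $\findim{\op{\Lambda}}<\infty$'' holds vacuously-strongly. The real content is the reverse implication, which I would prove by contraposition.

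So suppose the little finitistic dimension conjecture fails; I want to produce a finite dimensional algebra $\Lambda$ with $\findim{\Lambda}<\infty$ but $\findim{\op{\Lambda}}=\infty$. First I would reduce to a well-behaved counterexample: since the finitistic dimension is invariant under Morita equivalence and under field extension (cited in Section~\ref{sec:constr} via \cite{Jensen--Lenzing--1982--LRFinDimDifferentExample}), a counterexample to the conjecture yields a \emph{basic} counterexample $A$ over an algebraically closed field $k$, i.e.\ a basic finite dimensional $k$-algebra $A$ with $\findim{A}=\infty$. Now apply Construction~\ref{constr:tildeA} to $A$ to obtain $\constr$. By Proposition~\ref{prop:findim(A)_less_than_findim(tildeA)}, $\findim{A}\le\findim{\constr}$, so $\findim{\constr}=\infty$ as well; equivalently, writing $\Lambda=\opconstr$, we have $\findim{\op{\Lambda}}=\infty$. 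On the other hand, Proposition~\ref{prop:findim(tildeA^op)_is_zero} gives $\findim{\opconstr}=0$, i.e.\ $\findim{\Lambda}=0<\infty$. Thus $\Lambda$ witnesses the failure of the implication ``$\findim{\Lambda}<\infty\Rightarrow\findim{\op{\Lambda}}<\infty$'', which is the contrapositive of the reverse direction. (This argument in fact proves the stronger Theorem~\ref{thm:intro_contapositive_findim}, since the constructed $\Lambda$ has $\findim{\Lambda}=0$ and $\findim{\op{\Lambda}}=\infty$.)

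The only genuine subtlety, and the step I would be most careful about, is the reduction to a \emph{basic} algebra over an \emph{algebraically closed} field, since Construction~\ref{constr:tildeA} is only set up in that generality. One must check that if some finite dimensional algebra over some field $k_0$ is a counterexample, then extending scalars to $\bar{k_0}$ and passing to the basic algebra of the result preserves infinitude of the little finitistic dimension — this is exactly what the cited invariance results provide, but it is worth stating explicitly that $\findim{\Lambda\otimes_{k_0}\bar{k_0}}=\findim{\Lambda}$ and that $\findim{}$ is a Morita invariant. Everything else is formal bookkeeping around the two propositions. I would also remark parenthetically that the identical argument with $\FinDim{}$ in place of $\findim{}$, using the $\FinDim$ halves of Propositions~\ref{prop:findim(A)_less_than_findim(tildeA)} and \ref{prop:findim(tildeA^op)_is_zero}, handles the big finitistic dimension version should one want it, though the statement as phrased only concerns the little finitistic dimension.
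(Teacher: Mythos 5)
Your proposal is correct and relies on exactly the same ingredients as the paper's proof: the reduction to a basic algebra over an algebraically closed field via Morita equivalence and the field-extension bound of Jensen--Lenzing, followed by Propositions~\ref{prop:findim(A)_less_than_findim(tildeA)} and \ref{prop:findim(tildeA^op)_is_zero} applied to $\constr$. The only difference is cosmetic: the paper argues the reverse implication directly (starting from an arbitrary $C$ and applying the hypothesis to $\opconstr$ to conclude $\findim{C}<\infty$), whereas you argue by contraposition; and for the field-extension step the paper only invokes the one-sided inequality $\findim{C}\le\findim{\bar{C}}$, which is all either argument actually needs, rather than full invariance.
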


\begin{proof}
    The forward direction holds trivially, so we focus on the reverse direction. 
    
    Let $C$ be a finite dimensional algebra over a field $k$. Then $\bar{C} = \tens{C}{k}{\bar{k}}$ is a finite dimensional algebra over the algebraically closed field $\bar{k}$, and $\findim{C} \leq \findim{\bar{C}}$ by \cite[Proposition~2.1]{Jensen--Lenzing--1982--LRFinDimDifferentExample}. Every finite dimensional algebra is Morita equivalent to a basic finite dimensional algebra and the finitistic dimension is invariant under Morita equivalence. So, there exists a basic finite dimensional algebra $A$ over the field $\bar{k}$ such that $\findim{C} \leq \findim{A}$.
    
    Let $\constr$ be defined as in Construction~\ref{constr:tildeA}. Then $\findim{\opconstr} =0$ by Proposition~\ref{prop:findim(tildeA^op)_is_zero}. Hence,
    \begin{equation*}
        \findim{\constr} = \findim{\op{(\opconstr)}} < \infty.
    \end{equation*}
    So, by Proposition~\ref{prop:findim(A)_less_than_findim(tildeA)}, $\findim{A} < \infty$, and the finitistic dimension conjecture holds for $C$.
\end{proof}

\begin{theorem} \label{thm:big_findim}
    The big finitistic dimension conjecture holds for all finite dimensional algebras if and only if, for all finite dimensional algebras $\Lambda$, $\FinDim{\Lambda}<\infty$ implies that $\FinDim{\op{\Lambda}} < \infty$.
\end{theorem}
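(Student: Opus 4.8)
The plan is to run the argument for Theorem~\ref{thm:little_findim} essentially verbatim, with $\findim{-}$ replaced by $\FinDim{-}$ throughout, because the two ingredients we need are already available in the ``big'' form: Proposition~\ref{prop:findim(A)_less_than_findim(tildeA)} records $\FinDim{A} \leq \FinDim{\constr}$, and Proposition~\ref{prop:findim(tildeA^op)_is_zero} records $\FinDim{\opconstr} = 0$. As before, the forward implication is immediate: if every finite dimensional algebra has finite big finitistic dimension, then in particular $\op{\Lambda}$ does whenever $\Lambda$ does. So only the reverse implication requires work, and for it I would assume the hypothesis that finiteness of $\FinDim{-}$ passes to opposite algebras.

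For the reverse implication, let $C$ be a finite dimensional algebra over a field $k$. First I would reduce to the setting of Construction~\ref{constr:tildeA}: pass to $\bar{C} = \tens{C}{k}{\bar{k}}$, observe that $\FinDim{C} \leq \FinDim{\bar{C}}$, and then replace $\bar{C}$ by a Morita equivalent basic algebra $A$ over $\bar{k}$, using that $\FinDim{-}$ is a Morita invariant; this gives $\FinDim{C} \leq \FinDim{A}$. Next, form $\constr$ as in Construction~\ref{constr:tildeA}. By Proposition~\ref{prop:findim(tildeA^op)_is_zero}, $\FinDim{\opconstr} = 0 < \infty$, so the hypothesis applied to $\opconstr$ yields $\FinDim{\constr} = \FinDim{\op{(\opconstr)}} < \infty$. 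Then Proposition~\ref{prop:findim(A)_less_than_findim(tildeA)} gives $\FinDim{A} \leq \FinDim{\constr} < \infty$, hence $\FinDim{C} < \infty$. As $C$ was arbitrary, the big finitistic dimension conjecture holds for all finite dimensional algebras.

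The only step not already literally present in the proof of Theorem~\ref{thm:little_findim} is the field-extension inequality $\FinDim{C} \leq \FinDim{\bar{C}}$ — the analog for $\findim{-}$ was quoted from \cite[Proposition~2.1]{Jensen--Lenzing--1982--LRFinDimDifferentExample}, but for the big version one wants to be confident that infinitely generated modules are handled, so that is where I would take the most care. I expect this to be routine rather than a genuine obstacle: $\bar{k}$ is free as a $k$-module with $1$ extendable to a basis, so $\bar{C} = \tens{C}{k}{\bar{k}}$ is free as a right $C$-module and $C$ is a direct summand of $\bar{C}$ restricted to $C$; consequently, for any right $C$-module $M$, applying the exact faithful functor $\tens{-}{k}{\bar{k}}$ to a projective resolution (projectives go to projectives over $\bar{C}$, exactness is preserved by flatness) and restricting back along $C \hookrightarrow \bar{C}$ shows $\projdim{C}{M} = \projdim{\bar{C}}{\tens{M}{k}{\bar{k}}}$, which yields the inequality on taking suprema. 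Morita invariance of $\FinDim{-}$ is equally routine, since a Morita equivalence is an exact equivalence of full module categories preserving projectives and hence projective dimensions. With these two observations the proof is complete.
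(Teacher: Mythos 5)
Your proof is correct and takes essentially the same approach as the paper: the paper's proof of Theorem~\ref{thm:big_findim} is literally stated as ``the same as the proof of Theorem~\ref{thm:little_findim}'' with \cite[Proposition~2.1]{Jensen--Lenzing--1982--LRFinDimDifferentExample} and the big-finitistic-dimension halves of Propositions~\ref{prop:findim(A)_less_than_findim(tildeA)} and \ref{prop:findim(tildeA^op)_is_zero}, which is exactly your plan. Your independent verification that $\FinDim{C} \leq \FinDim{\bar{C}}$ (via $\bar{C}$ being free over $C$, extension of scalars preserving projective resolutions, and $M$ being a summand of the restriction of $\tens{M}{k}{\bar{k}}$) is sound, whereas the paper simply cites the reference for this step.
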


\begin{proof}
    This is the same as the proof of Theorem~\ref{thm:little_findim}, using \cite[Proposition~2.1]{Jensen--Lenzing--1982--LRFinDimDifferentExample} and Propositions~\ref{prop:findim(A)_less_than_findim(tildeA)} and \ref{prop:findim(tildeA^op)_is_zero}.
\end{proof}

\begin{remark}
    If the little finitistic dimension conjecture fails, then there exists a finite dimensional algebra $A$ such that $\findim{\constr} = \infty$, but $\findim{\opconstr}=0$ by Propositions~\ref{prop:findim(A)_less_than_findim(tildeA)} and \ref{prop:findim(tildeA^op)_is_zero}. A similar argument holds for the big finitistic dimension.
\end{remark}

\section{Injective generation} \label{sec:IG}

This section has the same structure as Section~\ref{sec:findim} except we focus on generation properties of the derived categories of the finite dimensional algebras $A$, $\constr$, and $\opconstr$. We start with the definition of a localising subcategory.

\begin{definition}[Localising subcategory] \label{defn:localising_subcategory}
    Let $\Lambda$ be a finite dimensional algebra over a field. A triangulated subcategory of $\der{\Lambda}$ is a localising subcategory if it is closed under set indexed coproducts. Let $\mathcal{S} = \{X_i\}_{i \in I}$ be a set of complexes of $\Lambda$-modules. Denote the smallest localising subcategory that contains $\mathcal{S}$ by $\lsub{\der{\Lambda}}{\mathcal{S}}$.
\end{definition}

It is well-known that the derived category is compactly generated or, equivalently, that the smallest localising subcategory that contains the regular module is the entire derived category. We ask when this property holds for the injective modules.

\begin{definition}[Injectives generate] \label{defn:injectives_generate}
    Let $\Lambda$ be a finite dimensional algebra over a field. Then injectives generate for $\Lambda$ if $\lsubinj{\Lambda} = \der{\Lambda}$.
\end{definition}

This property is strongly connected to the homological conjectures. In particular, if injectives generate for a finite dimensional algebra over a field, then the big finitistic dimension conjecture holds for that algebra \cite[Theorem~4.3]{Rickard2019}. As such, it is perhaps unsurprising that an analogous set of results to those in Section~\ref{sec:findim} hold in the more general setting of injective generation.

\begin{proposition} \label{prop:IG(tildeA)_implies_IG(A)}
    Let $A$ be a basic finite dimensional algebra over an algebraically closed field, with $\constr$ defined as in Construction~\ref{constr:tildeA}. If injectives generate for $\tilde{A}$, then injectives generate for $A$.
\end{proposition}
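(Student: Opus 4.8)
The plan is to exploit the triangular matrix structure of $\constr = \twobytwo{A}{0}{{}_BM_A}{B}$ together with the recollement of derived categories attached to it. Writing $e = \twobytwo{1_A}{0}{0}{0}$ and $f = \twobytwo{0}{0}{0}{1_B}$, we have $e\constr e \cong A$, and the idempotent $e$ induces a functor $\der{\constr} \to \der{A}$ given on modules by $X \mapsto Xe$ (restriction along the ring map $\constr \to A$), with a fully faithful left adjoint $- \otimes_A^{\mathbf{L}} e\constr \colon \der{A} \to \der{\constr}$. The key observation is that this restriction functor is exact, preserves set-indexed coproducts, and sends injective $\constr$-modules to injective $A$-modules: indeed $\alginj{\constr}{e_i} e \cong \alginj{A}{e_i}$ for each $i$, since $e_i \constr e = e_i A$ as the off-diagonal and $B$-part of $\constr$ sit below the diagonal $A$-block. (The injective $\constr$-modules $\alginj{\constr}{\viidemp{i}}$ restrict to zero along $e$, which is harmless.)

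From here the argument is formal. First I would check that, because $- e$ is a coproduct-preserving triangulated functor, the preimage $(-e)^{-1}(\der{A})$ is a localising subcategory of $\der{\constr}$ — this is automatic — and more usefully, that $-e$ maps $\lsub{\der{\constr}}{\mathcal{S}}$ into $\lsub{\der{A}}{\{Xe : X \in \mathcal{S}\}}$ for any set $\mathcal{S}$, since the image of a localising subcategory under a coproduct-preserving triangulated functor generates one. Applying this with $\mathcal{S} = \Inj{\constr}$ and using $\lsubinj{\constr} = \der{\constr}$ by hypothesis, we get that $\der{A} = \{Xe : X \in \der{\constr}\}$ lies in $\lsub{\der{A}}{\{Y e : Y \in \Inj{\constr}\}}$. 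Since each $Ye$ for $Y$ injective over $\constr$ is an injective $A$-module (or zero), that localising subcategory is contained in $\lsubinj{A}$, whence $\der{A} \subseteq \lsubinj{A} \subseteq \der{A}$ and injectives generate for $A$.

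The one point that needs genuine care — and which I expect to be the main obstacle — is justifying that every object of $\der{A}$ is of the form $Xe$ for some $X \in \der{\constr}$, i.e. that the restriction functor $-e$ is essentially surjective. This follows from the existence of the fully faithful left adjoint $i_* = - \otimes_A^{\mathbf{L}} e\constr$ with $i_* (Y) e \cong Y$ for all $Y \in \der{A}$; concretely $e\constr$ is projective as a left $A$-module (it is $\twobytwo{A}{0}{M}{0}$ read appropriately, and ${}_A(e\constr e) = {}_AA$ while the rest is the bimodule $M$, which is free over $A$ on the right... here one must track sides carefully) so the derived tensor is just the ordinary tensor and $(Y \otimes_A e\constr)e = Y \otimes_A e\constr e = Y$. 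Alternatively, and perhaps more cleanly, one avoids essential surjectivity entirely: it suffices to show the injective $\constr$-modules restrict into $\lsubinj{A}$ and that $i_*(\Inj{A}) \subseteq \lsubinj{\constr}$, but the cited route through $-e$ being a quotient functor in the recollement is the shortest. I would present the recollement
\begin{equation*}
    \der{B} \longrightarrow \der{\constr} \longrightarrow \der{A}
\end{equation*}
attached to the idempotent $e$, cite the standard fact (e.g. from the theory of recollements of triangulated categories, or directly Rickard's treatment) that the quotient functor $j^* = -e$ is coproduct-preserving, triangulated, and essentially surjective, and then run the three-line generation argument above. The verification that $j^*$ carries injectives to injectives is the concrete computation $\alginj{\constr}{e_i}e \cong \alginj{A}{e_i}$, visible already in the module diagrams of Example~\ref{eg:construction_of_tildeA}.
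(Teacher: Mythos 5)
Your argument is essentially correct and fills in what the paper leaves implicit: the paper's entire proof is a one-line citation to \cite[Example~6.11]{Cummings2021}, which treats triangular matrix rings. The recollement at the idempotent $e = \twobytwo{1_A}{0}{0}{0}$ is the natural mechanism, and your outline --- $j^* = (-)e$ is exact, coproduct-preserving, essentially surjective, and carries $\Inj{\constr}$ into $\Inj{A}\cup\{0\}$, whence $\der{A} = j^*\bigl(\lsubinj{\constr}\bigr) \subseteq \lsub{\der{A}}{j^*(\Inj{\constr})} \subseteq \lsubinj{A}$ --- is a sound reconstruction of what must lie behind that citation. Where the paper gives a pointer, you give the actual recollement argument; the content is the same.

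There is one concrete slip in the very spot you flag as needing care. For $j^*$ to preserve injectives you want its left adjoint $j_! = -\otimes_A e\constr$ to be exact, i.e.\ $e\constr$ flat as a \emph{left} $A$-module. You identify $e\constr$ with $\twobytwo{A}{0}{M}{0}$, but that is $\constr e$, not $e\constr$; in fact $e\constr = \twobytwo{A}{0}{0}{0}$, which is free of rank one as a left $A$-module, so exactness of $j_!$ is immediate and no claim about $M$ is required. (The aside that $M$ is ``free over $A$ on the right'' is also false --- $M$ is a semisimple right $A$-module by construction --- but it plays no role once $e\constr$ and $\constr e$ are disentangled.) With that fixed, essential surjectivity of $j^*$ follows from $j^*j_! \cong \mathrm{id}$, i.e.\ from $e\constr\otimes_\constr\constr e \cong e\constr e = A$, and your formal generation argument closes the proof.
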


\begin{proof}
    Since $\constr$ is a triangular matrix ring, the statement holds by \cite[Example~6.11]{Cummings2021}.
\end{proof}

\begin{proposition} \label{prop:IG(tildeA^op)}
    Let $A$ be a basic finite dimensional algebra over an algebraically closed field, with $\constr$ defined as in Construction~\ref{constr:tildeA}. Then injectives generate for $\opconstr$.
\end{proposition}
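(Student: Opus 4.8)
The plan is to show that the derived category $\der{\opconstr}$ is generated by the injective modules by exhibiting the simple $\opconstr$-modules inside $\lsubinj{\opconstr}$, since the simples generate $\der{\opconstr}$ as a localising subcategory (the regular module has a finite filtration by simples, hence lies in $\lsub{\der{\opconstr}}{\Simp{\opconstr}}$, and that localising subcategory is all of $\der{\opconstr}$). So it suffices to prove $\simp{f} \in \lsubinj{\opconstr}$ for each $f \in \{\iidemp{i}, \viidemp{i} : 1 \leq i \leq n\}$, using Lemma~\ref{lemma:simples_in_top_of_dual}, which classifies the simple $\opconstr$-modules and provides, for each $i$, the short exact sequence
\begin{equation*}
    0 \to \simp{\viidemp{i}} \to \alginj{\opconstr}{\viidemp{i}} \to \simp{\iidemp{i}} \oplus \simp{\viidemp{i}} \to 0.
\end{equation*}

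First I would record that a short exact sequence of modules yields a triangle in $\der{\opconstr}$, so from the sequence above we get a triangle $\simp{\viidemp{i}} \to \alginj{\opconstr}{\viidemp{i}} \to \simp{\iidemp{i}} \oplus \simp{\viidemp{i}} \to \simp{\viidemp{i}}[1]$. The middle term is injective, hence in $\lsubinj{\opconstr}$; so if I already knew $\simp{\viidemp{i}} \in \lsubinj{\opconstr}$ then rotating the triangle would give $\simp{\iidemp{i}} \oplus \simp{\viidemp{i}} \in \lsubinj{\opconstr}$, and since localising subcategories are closed under direct summands (they are thick), I would conclude $\simp{\iidemp{i}} \in \lsubinj{\opconstr}$. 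Thus the whole problem reduces to showing each $\simp{\viidemp{i}}$ lies in $\lsubinj{\opconstr}$.

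For this I would look more carefully at the structure revealed in the proof of Lemma~\ref{lemma:simples_in_top_of_dual}: the indecomposable injective $\alginj{\opconstr}{\viidemp{i}}$ has socle $\simp{\viidemp{i}}$ and top $\simp{\iidemp{i}} \oplus \simp{\viidemp{i}}$, so in particular it is a length-three module (uniserial-like with this structure); more importantly, the $B_i$-part of $\constr$ is $\quotient{k[\loopedge{i}]}{\langle\loopedge{i}^2\rangle}$, a self-injective local algebra. I would argue that $\simp{\viidemp{i}}$ has an injective resolution all of whose terms are (finite sums of) the injectives $\alginj{\opconstr}{\viidemp{i}}$ and $\alginj{\opconstr}{\iidemp{i}}$ — the point being that $\op{B_i}$ is self-injective so $\simp{\viidemp{i}}$, viewed appropriately, has periodic-type injective resolution within the injectives of $\opconstr$; then the standard argument (brutal truncation of the injective resolution, taking a homotopy colimit of the truncations, and using that the cone of the canonical map from the module to the homotopy colimit of its truncated injective resolution is zero because the resolution is a resolution) places $\simp{\viidemp{i}}$ in $\lsubinj{\opconstr}$. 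An alternative and probably cleaner route: invoke the triangular matrix structure directly — $\opconstr$ is the triangular matrix ring $\twobytwo{\op{B}}{\op{M}}{0}{\op{A}}$ with $\op{B}$ self-injective, and then cite \cite[Example~6.11]{Cummings2021} (the same result used in Proposition~\ref{prop:IG(tildeA)_implies_IG(A)}) together with the fact that injectives generate for the self-injective algebra $\op{B}$ and for $\op{A}$... but wait, injectives need not generate for $\op{A}$ in general, so this naive reduction fails; the genuine content is that the $B$-component is self-injective and the bimodule $M$ has the special simple/semisimple structure imposed in Construction~\ref{constr:tildeA}, which forces the $\op{A}$-simples to be reachable from the $\op{B}$-injectives via the sequences in Lemma~\ref{lemma:simples_in_top_of_dual}.

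The main obstacle I anticipate is exactly the last point: showing $\simp{\viidemp{i}} \in \lsubinj{\opconstr}$ cannot merely quote a black-box triangular-matrix result, because the corresponding statement for $\op{A}$ is the finitistic dimension conjecture itself — so the argument must genuinely exploit that $\op{B_i}$ is self-injective (hence $\simp{\viidemp{i}}$ is "$\Omega$-periodic on the $B$-side" and its injective coresolution stays within the finitely many injectives $\alginj{\opconstr}{\viidemp{i}}$), and only afterwards feed this into the short exact sequences to capture the $\simp{\iidemp{i}}$. Once $\simp{\viidemp{i}} \in \lsubinj{\opconstr}$ is established, everything else is the formal triangle-and-summand manipulation above, and the proposition follows.
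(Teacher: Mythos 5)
Your reduction to showing the simple $\opconstr$-modules lie in $\lsubinj{\opconstr}$, and your observation that $\simp{\iidemp{i}} \in \lsubinj{\opconstr}$ would follow formally from $\simp{\viidemp{i}} \in \lsubinj{\opconstr}$ via the triangle coming from the short exact sequence, both match the paper's setup. But the step you propose for establishing $\simp{\viidemp{i}} \in \lsubinj{\opconstr}$ has two genuine problems.

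First, the claimed shape of the injective coresolution is wrong. The cokernel of $\simp{\viidemp{i}} \hookrightarrow \alginj{\opconstr}{\viidemp{i}}$ is $\simp{\iidemp{i}} \oplus \simp{\viidemp{i}}$, so the second term of the coresolution is already $\alginj{\opconstr}{\iidemp{i}} \oplus \alginj{\opconstr}{\viidemp{i}}$, and from the third term onward one is coresolving a cosyzygy of $\simp{\iidemp{i}}$, which depends entirely on $\op{A}$. Self-injectivity of $\op{B_i}$ does not confine the coresolution to a finite set of indecomposable injectives; the $\simp{\iidemp{i}}$-component escapes into the $A$-part of the algebra at the very first step. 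Second, and more fundamentally, the ``standard argument'' you invoke --- brutally truncating an injective resolution and taking a homotopy colimit --- does not place a module in the localising subcategory generated by the injectives. An injective resolution is bounded \emph{below}, and the bounded brutal truncations of a bounded-below complex are \emph{quotients}, forming an inverse system under the natural projections, not a direct system of subcomplexes. Localising subcategories are closed under coproducts and homotopy colimits, not products or homotopy limits, so there is no colimit to take. If this argument worked it would show every module over every finite dimensional algebra lies in the localising subcategory generated by injectives, resolving the finitistic dimension conjecture outright. Rickard's Proposition~2.2, which the paper does use, applies to bounded \emph{above} complexes of injectives, precisely because there the bounded brutal truncations are subcomplexes forming a genuine direct system.

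The paper's proof avoids resolving $\simp{\viidemp{i}}$ entirely. The key observation is that the short exact sequence from Lemma~\ref{lemma:simples_in_top_of_dual} has a self-loop: $\simp{\viidemp{i}}$ is both the kernel and a direct summand of the cokernel. This lets one splice the sequence with itself \emph{downward} to produce a complex $X_i$ of copies of $\alginj{\opconstr}{\viidemp{i}}$, concentrated in degrees $\leq 0$, with differential $\loopedge{i}$. As a bounded-above complex of injectives, $X_i$ lies in $\lsubinj{\opconstr}$. One then checks that each $\cohomology{n}{X_i}$ is a quotient of $X_i^n$, so that the canonical map from $X_i$ to the direct sum of its shifted cohomology modules is a quasi-isomorphism: $X_i$ is formal. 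Its cohomology modules are copies of $\simp{\iidemp{i}}$ in negative degrees and $\simp{\iidemp{i}} \oplus \simp{\viidemp{i}}$ in degree zero, so both simples are direct summands of an object of $\lsubinj{\opconstr}$. This captures $\simp{\iidemp{i}}$ and $\simp{\viidemp{i}}$ at the same time rather than deducing one from the other, and it is exactly the piece your proposal is missing.
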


\begin{proof}
    Since $\opconstr$ is a finite dimensional algebra over a field, the smallest localising subcategory of $\der{\opconstr}$ that contains the simple $\opconstr$-modules is equal to $\der{\opconstr}$ by \cite[Lemma~6.1]{Rickard2019}. So, it suffices to show that the simple $\opconstr$-modules lie in $\lsubinj{\opconstr}$.
    
    Fix $1 \leq i \leq n$. By Lemma~\ref{lemma:simples_in_top_of_dual}, there exists a short exact sequence of $\opconstr$-modules
    \begin{equation*} \label{eq:cohomologyxi}
        0 \xrightarrow[\hspace{3.2em}]{} 
        \simp{\viidemp{i}} \xrightarrow[\hspace{3.2em}]{} 
        \alginj{\opconstr}{\viidemp{i}} \xrightarrow[\hspace{3.25em}]{\onebytwo{\connedge{i}}{\loopedge{i}}} 
        \simp{\iidemp{i}} \oplus \simp{\viidemp{i}}
        \xrightarrow[\hspace{3.2em}]{} 
        0.
    \end{equation*}
    Both the kernel and image of $(\connedge{i} \enspace \loopedge{i})$ contain a copy of $\simp{\viidemp{i}}$ as a direct summand. So, the short exact sequence can be spliced with itself to build a complex
    \begin{equation*}
        X_i = \cdots \rightarrow  \alginj{\opconstr}{\viidemp{i}} \xrightarrow{\loopedge{i}} \alginj{\opconstr}{\viidemp{i}} \xrightarrow{\loopedge{i}} \alginj{\opconstr}{\viidemp{i}} \rightarrow 0 \rightarrow 0 \rightarrow \cdots
    \end{equation*}
    whose cohomology modules are given by
    \begin{equation*}
        \cohomology{n}{X_i} \cong \begin{cases}
        \simp{\iidemp{i}} \oplus \simp{\viidemp{i}},& \text{if } n = 0,
        \\
        \simp{\iidemp{i}},& \text{if } n < 0,
        \\
        0,& \text{otherwise}.
        \end{cases}
    \end{equation*}
    Since $X_i$ is a bounded above complex of injective $\opconstr$-modules, we have that it lies in $\lsubinj{\opconstr}$ by \cite[Proposition~2.2]{Rickard2019}. Moreover, $\cohomology{n}{X_i}$ is a quotient of $X_i^n$ for each $n \in \integers$, so the canonical projection morphism
    \begin{equation*}
        {X_i} \rightarrow {\ds{n}{\integers}{\cohomology{n}{X_i}[-n]}}
    \end{equation*}
    is a quasi-isomorphism. Localising subcategories are closed under quasi-isomorphisms and direct summands, so both $\simp{\iidemp{i}}$ and $\simp{\viidemp{i}}$ lie in $\lsubinj{\opconstr}$. Consequently, all the simple $\opconstr$-modules lie in $\lsubinj{\opconstr}$ by Lemma~\ref{lemma:simples_in_top_of_dual}, and injectives generate for $\opconstr$.
\end{proof}

\begin{remark} \label{remark:formal_complex_for_quiver_algebra}
    If $A$ is a quiver algebra, then the complex $X_i$ used in the proof of Proposition~\ref{prop:IG(tildeA^op)} can be illustrated via diagrams of modules. In particular, it is the complex
    \begin{center}
        \begin{tikzpicture}
        \node (1) at (3,0) {$X_i = \cdots$};
        \node (3) at (6,0) {\begin{tikzpicture}
                \node (31) at (7,1) {$\textcolor{Emerald}{\tilde{i}}$};
            \node (32) at (7.5,2) {$\textcolor{Emerald}{\tilde{i}}$};

            \draw[<-,Emerald] (31) to node[midway,right] {$\textcolor{Emerald}{\loopedge{i}}$} (32);

            \node (33) at (6.5,2) {$i$};

            \draw[<-,Emerald] (31) to node[midway,left] {$\textcolor{Emerald}{\connedge{i}}$} (33);
        \end{tikzpicture}};
        \node (4) at (9,0) {\begin{tikzpicture}
                \node (31) at (7,1) {$\textcolor{Emerald}{\tilde{i}}$};
            \node (32) at (7.5,2) {$\textcolor{Emerald}{\tilde{i}}$};

            \draw[<-,Emerald] (31) to node[midway,right] {$\textcolor{Emerald}{\loopedge{i}}$} (32);

            \node (33) at (6.5,2) {$i$};

            \draw[<-,Emerald] (31) to node[midway,left] {$\textcolor{Emerald}{\connedge{i}}$} (33);
        \end{tikzpicture}};
        \node (5) at (12,0) {\begin{tikzpicture}
                \node (31) at (7,1) {$\textcolor{Emerald}{\tilde{i}}$};
            \node (32) at (7.5,2) {$\textcolor{Emerald}{\tilde{i}}$};

            \draw[<-,Emerald] (31) to node[midway,right] {$\textcolor{Emerald}{\loopedge{i}}$} (32);

            \node (33) at (6.5,2) {$i$};

            \draw[<-,Emerald] (31) to node[midway,left] {$\textcolor{Emerald}{\connedge{i}}$} (33);
        \end{tikzpicture}};
        \node (6) at (14.5,0) {$0$};
        \node (7) at (16.5,0) {$\cdots$.};

        \draw[->] (1) to (3);
        \draw[->] (3) to node[midway,above] {$\loopedge{i}$} (4);
        \draw[->] (4) to node[midway,above] {$\loopedge{i}$} (5);
        \draw[->] (5) to (6);
        \draw[->] (6) to (7);
    \end{tikzpicture}
    \end{center}
\end{remark}

\begin{theorem} \label{thm:IG}
    Injectives generate for all finite dimensional algebras if and only if, for all finite dimensional algebras $\Lambda$, injectives generate for $\Lambda$ implies that injectives generate for $\op{\Lambda}$.
\end{theorem}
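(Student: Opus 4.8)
The plan is to mirror the proof of Theorem~\ref{thm:little_findim} (and \ref{thm:big_findim}), replacing the finitistic-dimension statements with their injective-generation analogues. As before, the forward implication is trivial: if injectives generate for every finite dimensional algebra, then in particular they generate for $\op{\Lambda}$ whenever they generate for $\Lambda$. So the work is entirely in the reverse direction.

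For the reverse direction, suppose that for all finite dimensional algebras $\Lambda$, injectives generating for $\Lambda$ implies injectives generating for $\op{\Lambda}$; we must show injectives generate for an arbitrary finite dimensional algebra $C$ over a field $k$. First I would reduce to the basic, algebraically closed case: injective generation should be insensitive to the passage to a basic algebra and, via the arguments available for field extension (cf.\ the treatment in \cite{Rickard2019, Cummings2021}), one reduces to a basic finite dimensional algebra $A$ over an algebraically closed field such that injective generation for $A$ would give injective generation for $C$. (If a clean field-extension statement for injective generation is not directly citable, the cautious route is to phrase the theorem — as the paper does for Theorems~A and~B — in a way that the construction $\constr$ applies; but following the pattern of Section~\ref{sec:findim}, I would assume the reductions go through as in the finitistic-dimension case.) Then form $\constr$ as in Construction~\ref{constr:tildeA}.

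Now apply Proposition~\ref{prop:IG(tildeA^op)}: injectives generate for $\opconstr$. By hypothesis, injectives therefore generate for $\op{(\opconstr)} = \constr$. Finally, Proposition~\ref{prop:IG(tildeA)_implies_IG(A)} gives that injectives generate for $A$, hence for $C$. This completes the chain
\begin{equation*}
    \text{IG for } \opconstr \ \Longrightarrow\ \text{IG for } \constr \ \Longrightarrow\ \text{IG for } A \ \Longrightarrow\ \text{IG for } C.
\end{equation*}

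The main obstacle I anticipate is the reduction step to a basic algebra over an algebraically closed field: unlike the finitistic dimension, injective generation is a property of the full unbounded derived category, and one needs either a Morita-invariance statement and a behaviour-under-field-extension statement for $\lsubinj{-}$, or a reformulation that sidesteps them. Everything after that reduction is a direct concatenation of Propositions~\ref{prop:IG(tildeA)_implies_IG(A)} and \ref{prop:IG(tildeA^op)}, exactly parallel to the proof of Theorem~\ref{thm:little_findim}.
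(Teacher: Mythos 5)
Your proposal follows the paper's proof exactly: forward direction trivial, reverse direction via the chain $\opconstr \Rightarrow \constr \Rightarrow A \Rightarrow C$ using Propositions~\ref{prop:IG(tildeA^op)} and \ref{prop:IG(tildeA)_implies_IG(A)}. The reduction step that you correctly flag as the main obstacle does close cleanly: the paper cites \cite[Lemma~5.2]{Cummings2021} applied to the ring inclusion $C \hookrightarrow \bar{C} = \tens{C}{k}{\bar{k}}$ for the field-extension part, and \cite[Theorem~3.4]{Rickard2019} (injective generation is invariant under derived equivalence, hence under Morita equivalence) for the passage to a basic algebra, so no reformulation of the theorem is needed.
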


\begin{proof}
    Similarly to the proof of Theorem~\ref{thm:little_findim}, it suffices to consider basic finite dimensional algebras over algebraically closed fields. In particular, let $C$ be a finite dimensional algebra over a field $k$. Then $\bar{C} = \tens{C}{k}{\bar{k}}$ is a finite dimensional algebra over the algebraically closed field $\bar{k}$. Moreover, if injectives generate for $\bar{C}$, then injectives generate for $C$ by the application of \cite[Lemma~5.2]{Cummings2021} to the ring homomorphism $C \hookrightarrow \bar{C}$. Every finite dimensional algebra is Morita equivalent to a basic finite dimensional algebra and injective generation is invariant under Morita equivalence because it is invariant under derived equivalence \cite[Theorem~3.4]{Rickard2019}. So, there exists a basic finite dimensional algebra $A$ over $\bar{k}$ such that if injectives generate for $A$, then injectives generate for $C$.
    
    Thus, the statement follows similarly to the proof of Theorem~\ref{thm:little_findim}, using Propositions~\ref{prop:IG(tildeA^op)} and \ref{prop:IG(tildeA)_implies_IG(A)}.
\end{proof}

\begin{remark} \label{rem:PC}
    There is a dual concept to injective generation, namely projective cogeneration, that concerns the smallest triangulated subcategory of the derived category that contains the \textit{projective} modules and is closed under set indexed \textit{products}. See \cite[Section~5]{Rickard2019}. The analogous statement to Theorem~\ref{thm:IG} for projective cogeneration also holds. The proof of the analogous theorem has the same structure except we replace Propositions~\ref{prop:IG(tildeA)_implies_IG(A)} and \ref{prop:IG(tildeA^op)} by their dual statements.
\end{remark}

\bibliographystyle{alpha}

\bigskip

\noindent 
\footnotesize \textsc{C. Cummings, Department of Mathematics, Aarhus University, Denmark.}
\\
\noindent  \textit{Email address:} 
{\texttt{c.cummings@math.au.dk}}

\end{document}